\newtheorem{thm}{Theorem}
\newtheorem{prop}[thm]{Proposition}
\newtheorem{cor}[thm]{Corollary}
\newtheorem{lem}[thm]{Lemma}
\theoremstyle{definition}
\newtheorem*{rem}{Remark}
\newtheorem{ex}{Example}
\providecommand{\RR}{\mathbb{R}}
\providecommand{\CC}{\mathbb{C}}
\providecommand{\QQ}{\mathbb{Q}}
\DeclareMathOperator{\sym}{Sym}
\providecommand{\eps}{\epsilon}
\DeclareMathOperator{\lt}{lt}
\DeclareMathOperator{\id}{id}
\DeclareMathOperator{\tr}{tr}
\DeclareMathOperator{\diag}{diag}
\DeclareMathOperator{\cha}{char}
\begin{document}

\title{Formally real involutions on central simple algebras}{}

\author{J. Cimpri\v c}

\keywords{involutions, central simple algebras, crossed products, sums of hermitian squares}

\subjclass[2000]{16K20, 16W10, 12D15}

\date{August 10th 2006, revised January 28th 2007}

\address{Jaka Cimpri\v c, University of Ljubljana, Faculty of Math. and Phys.,
Dept. of Math., Jadranska 19, SI-1000 Ljubljana, Slovenija. 
E-mail: cimpric@fmf.uni-lj.si. www page: http://www.fmf.uni-lj.si/ $\!\!\sim$cimpric.}

\begin{abstract}
An involution $\#$ on an associative ring $R$ is \textit{formally real} if 
a sum of nonzero elements of the form $r^\# r$ where $r \in R$ is nonzero.
Suppose that $R$ is a central simple algebra (i.e. $R=M_n(D)$ for some integer $n$
and central division algebra $D$) and $\#$ is an involution on $R$ of the form 
$r^\# = a^{-1} r^\ast a$, where $\ast$ is some transpose involution on $R$ 
and $a$ is an invertible matrix such that $a^\ast=\pm a$.
In section 1 we characterize formal reality of $\#$ in terms of $a$ and $\ast|_D$.
In later sections we apply this result to the study of formal reality
of involutions on crossed product division algebras. We can characterize
involutions on $D=(K/F,\Phi)$ that extend to a formally real involution 
on the split algebra $D \otimes_F K \cong M_n(K)$. Every such involution 
is formally real but we show that there exist formally real involutions 
on $D$ which are not of this form. In particular, there exists
a formally real involution $\#$ for which the hermitian trace form
$x \mapsto \tr(x^\#x)$ is not positive semidefinite.
\end{abstract}

\maketitle

\thispagestyle{empty}
\section{$\eps$-hermitian cones on central simple algebras}

We say that an involution $\ast$ on a central simple algebra $R$ is \textit{formally real}
if any finite sum of nonzero elements of the form $rr^\ast$ where $r \in R$ is nonzero. 
In this section we introduce our main technical tool for  the study of formally real involutions
- the notion of an $\eps$-hermitian cone. The precise relationship between
$\eps$-hermitian cones and formally real involutions is explained by Corollary \ref{firstcorr}.

Recall that a central simple algebra is a full matrix ring over a central division algebra.
Let $R$ be a central simple $F$-algebra with involution $\ast$ and $\eps \in F$ 
such that $\eps \eps^\ast=1$.  An element $a \in R$ is $\eps$-hermitian if $\eps a^\ast=a$. 
The set of all $\eps$-hermitian elements in $R$ will be denoted by $S_\eps(R)$. 
A subset $M$ of $S_\eps(R)$ such that $M+M \subseteq M$, $aMa^\ast \subseteq M$ for every $a \in R$
and $M \cap -M = \{0\}$ will be called an $\eps$-\textit{hermitian cone} on $R$.

\begin{thm}
\label{main}
Let $D$ be a central division $F$-algebra with involution $\ast$. For every $\eps \in F$ such that
$\eps \eps^\ast=1$ and for every integer $n$ there exists a one-to-one correspondence between
\begin{itemize}
\item $\eps$-hermitian cones on $D$ and 
\item $\eps$-hermitian cones on $M_n(D)$ (with involution $[x_{ij}]^\ast=[x_{ji}^\ast]$).
\end{itemize}
\end{thm}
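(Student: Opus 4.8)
The plan is to set up explicit maps in both directions and prove they are mutually inverse. Write $R=M_n(D)$ with the involution $[x_{ij}]^\ast=[x_{ji}^\ast]$, let $E_{ij}$ be the matrix units, and for a column $v\in D^n$ write $v^\ast x v:=\sum_{r,s}v_r^\ast x_{rs}v_s\in D$. To an $\eps$-hermitian cone $N$ on $R$ I assign $\flat(N)=\{d\in D:dE_{11}\in N\}$, and to an $\eps$-hermitian cone $M$ on $D$ I assign $\sharp(M)=\{\sum_k a_k(d_kE_{11})a_k^\ast : a_k\in R,\ d_k\in M\}$. First I would dispose of the routine facts: $dE_{11}$ is $\eps$-hermitian in $R$ iff $d\in S_\eps(D)$, and $(a(dE_{11})a^\ast)_{11}=a_{11}da_{11}^\ast$; from this $\flat(N)$ is an $\eps$-hermitian cone on $D$, the set $\sharp(M)$ lies in $S_\eps(R)$ and is closed under addition and under $b\mapsto aba^\ast$, and $\flat(\sharp(M))=M$ (for ``$\supseteq$'' take $a=I$; for ``$\subseteq$'' read off the $(1,1)$-entry and use $cMc^\ast\subseteq M$ in $D$). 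The inclusion $\sharp(\flat(N))\subseteq N$ is immediate from the cone axioms of $N$. So the theorem reduces to two statements: (A) $\sharp(M)\cap-\sharp(M)=\{0\}$, so that $\sharp(M)$ really is an $\eps$-hermitian cone; and (B) $N\subseteq\sharp(\flat(N))$, so that $\sharp\circ\flat=\id$.

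The core is (B), which I would prove by induction on $n$ in the stronger form: every $x\in N$ is a finite sum of terms $a(dE_{11})a^\ast$ with $dE_{11}\in N$. Two tools feed the induction. First, conjugating $x\in N$ by the matrix whose first row is $(v_1^\ast,\dots,v_n^\ast)$ and whose other rows vanish yields $(v^\ast x v)E_{11}\in N$, so $v^\ast x v\in\flat(N)$ for every $v\in D^n$. Second, the classical diagonalization of $\eps$-hermitian forms over the division ring $(D,\ast)$: if some $v$ has $d_1:=v^\ast x v\neq0$, then $D^n=Dv\oplus v^{\perp}$ provides an invertible $b$ with $b^\ast x b=\diag(d_1,x')$ for an $\eps$-hermitian $x'\in M_{n-1}(D)$. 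The point is that this congruence is compatible with the cone: since $x\in N$ we get $\diag(d_1,x')=b^\ast x(b^\ast)^\ast\in N$, hence $d_1E_{11}=E_{11}\diag(d_1,x')E_{11}^\ast\in N$ and $\diag(0,x')=\diag(0,I_{n-1})\diag(d_1,x')\diag(0,I_{n-1})^\ast\in N$. Restricting $N$ along $y\mapsto\diag(0,y)$ produces an $\eps$-hermitian cone $N'$ on $M_{n-1}(D)$ with $x'\in N'$; the inductive hypothesis writes $x'$ as a sum of $c_k(d_k'E_{11})c_k^\ast$ with $d_k'E_{11}\in N'$, i.e. $d_k'E_{22}\in N$, and conjugating by $E_{12}$ gives $d_k'\in\flat(N)$. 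Assembling these pieces and conjugating back by $(b^\ast)^{-1}$ exhibits $x$ in the required form.

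The hard part is the exceptional ``alternating'' situation in the induction, where no $v$ with $v^\ast x v\neq0$ exists, since then there is no obvious vector to diagonalize along. Here I would show that $v^\ast x v=0$ for all $v$ already forces $x=0$ unless $D=F$ is a field, $\ast|_F$ is trivial, $\eps=-1$ and $\cha F\neq2$: expanding $v^\ast x v$ on $v=e_i+e_jd$ yields $x_{ij}d+\eps d^\ast x_{ij}^\ast=0$ for all $d\in D$; putting $d=1$ and resubstituting gives $x_{ij}d=d^\ast x_{ij}$, so if $x_{ij}\neq0$ the map $d\mapsto x_{ij}^{-1}d^\ast x_{ij}$ is the identity --- an anti-automorphism equal to an automorphism, forcing $D$ commutative with $\ast$ trivial, and then $(1+\eps)x_{ij}=0$ kills $x$ except when $\eps=-1$. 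In that last case the classification of alternating bilinear forms over a field of characteristic $\neq2$ shows $-x$ is congruent to $x$, so $x\in N\cap-N=\{0\}$; thus $x=0$ in every exceptional case and (B) is trivial there.

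Finally, (A) follows by the same mechanism: $T:=\sharp(M)\cap-\sharp(M)$ is closed under addition and under $b\mapsto aba^\ast$, and $\flat(T)\subseteq M\cap-M=\{0\}$, so a nonzero $x\in T$ would, by the first tool, have to be alternating, whence the previous paragraph forces $x=0$ (and in the skew case $S_{-1}(F)=\{0\}$ already makes $M$, hence $\sharp(M)$, trivial). Combining everything, $\flat$ and $\sharp$ are well-defined maps between the two sets of $\eps$-hermitian cones with $\flat\circ\sharp=\id$ and $\sharp\circ\flat=\id$, which is the asserted bijection. I expect the compatibility of the form-diagonalization with the cone structure, together with the separate treatment of the alternating case, to be the only genuinely delicate points.
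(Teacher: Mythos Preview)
Your argument is correct and takes a genuinely different route from the paper's. Both use the same restriction map (your $\flat$ is the paper's $G$), but for the inverse the paper sends a cone $N$ on $D$ to the \emph{maximal} extension
\[
F(N)=\{A\in S_\eps(M_n(D)) : xAx^\ast\in N\text{ for every row }x\in D^n\},
\]
whereas you send it to the \emph{minimal} one, the cone $\sharp(N)$ generated in $M_n(D)$ by $\{dE_{11}:d\in N\}$. The paper verifies $F(G(M))=M$ by invoking the full diagonalization of Proposition~\ref{diag} once and then checking the equality directly on diagonal matrices; you instead run an induction on $n$, peeling off one anisotropic vector at a time and tracking cone membership through each congruence. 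Your route is more constructive --- it actually exhibits each $x\in N$ as a sum $\sum_k a_k(d_kE_{11})a_k^\ast$ with $d_kE_{11}\in N$ --- while the paper's description of $F(N)$ is the one used downstream (it reappears verbatim in the definition of $N^e$ in Theorem~\ref{mainext}). That the two inverses agree, as they must, is itself a pleasant byproduct: the cone on $M_n(D)$ generated by $\{dE_{11}:d\in N\}$ coincides with the cone of $\eps$-hermitian matrices all of whose values $xAx^\ast$ lie in $N$.

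One small point: your treatment of the alternating case tacitly assumes $\cha F\neq2$, both in ``$(1+\eps)x_{ij}=0$ kills $x$ except when $\eps=-1$'' and in ``$S_{-1}(F)=\{0\}$''. In characteristic $2$ one has $-M=M$ for any subset $M$, so the axiom $M\cap-M=\{0\}$ already forces every $\eps$-hermitian cone to be $\{0\}$ and the bijection is trivial; you should say so explicitly.
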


We will need the following well-known \lq\lq Diagonalization Theorem".

\begin{prop}
\label{diag}
Every matrix $A \in S_\eps(R)$ is congruent to a direct sum of  matrices
of the form $[a]$ or $\left[ \begin{array}{cc}0 & b \\ \eps b^\ast & 0 \end{array} \right]$ 
where $a \in S_\eps(D)$ and $b \in D$. Moreover, if either $\eps \ne -1$ or $\ast |_D \ne \id$
then every matrix of the form
 $\left[ \begin{array}{cc}0 & b \\ \eps b^\ast & 0 \end{array} \right]$ 
is congruent to a diagonal matrix.
\end{prop}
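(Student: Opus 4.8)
The plan is to argue by strong induction on $n$, relying on two elementary observations: a congruence $A\mapsto PAP^\ast$ with $P\in M_n(D)$ invertible maps $S_\eps(M_n(D))$ into itself, because $\eps(PAP^\ast)^\ast=P(\eps A^\ast)P^\ast=PAP^\ast$ (here $\eps$ is central in $F$ and $\eps A^\ast=A$); and a block-diagonal matrix lies in $S_\eps$ if and only if each of its diagonal blocks does. If $A=0$ there is nothing to prove ($A=[0]\oplus\cdots\oplus[0]$ and $0\in S_\eps(D)$), and the case $n\le 1$ is trivial, so assume $A\ne 0$ and $n\ge 2$.

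Suppose first that some diagonal entry $a_{ii}$ is nonzero. Since $A\in S_\eps(M_n(D))$ forces $a_{ij}=\eps a_{ji}^\ast$ for all $i,j$, we have $a_{ii}\in S_\eps(D)$; after a permutation congruence (conjugation by a permutation matrix $P_\sigma$, for which $P_\sigma^\ast=P_\sigma^{-1}$) I may assume $a_{11}\ne 0$, hence invertible in $D$. The lower-unipotent congruence by $P=I-\sum_{i\ge 2}a_{i1}a_{11}^{-1}e_{i1}$ ($e_{i1}$ the matrix unit) then gives $PAP^\ast=[a_{11}]\oplus A'$ with $A'\in S_\eps(M_{n-1}(D))$; the only point needing a computation is that the column operations encoded by $P^\ast$ are precisely those clearing the first row, which is where $\eps\eps^\ast=1$ and $a_{11}\in S_\eps(D)$ are used. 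If instead every diagonal entry vanishes, choose $a_{ij}\ne 0$ with $i\ne j$; a permutation congruence moves it into the $(1,2)$ slot, so $b:=a_{12}\ne 0$ and the $2\times 2$ block $H=\left[\begin{array}{cc}0&b\\ \eps b^\ast&0\end{array}\right]$ is invertible. Running the same completing-the-square argument with the invertible block $H$ in the role of the pivot gives $PAP^\ast=H\oplus A''$ with $A''\in S_\eps(M_{n-2}(D))$. In either case the induction hypothesis, applied to $A'$ respectively $A''$, finishes the first assertion.

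For the ``moreover'' part it suffices to diagonalize a single block $H=\left[\begin{array}{cc}0&b\\ \eps b^\ast&0\end{array}\right]$ with $b\ne 0$. A short computation shows that the congruence of $H$ by $\left[\begin{array}{cc}1&d^\ast\\ 0&1\end{array}\right]$ has $(1,1)$-entry equal to $bd+\eps(bd)^\ast\in S_\eps(D)$. It remains to choose $d\in D$ making this nonzero: if $\eps\ne -1$, take $d=b^{-1}$, so the entry is $1+\eps\ne 0$; if $\eps=-1$ but $\ast|_D\ne\id$, pick $c\in D$ with $c\ne c^\ast$ and take $d=b^{-1}c$, so the entry is $c-c^\ast\ne 0$. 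Once a congruent copy of $H$ has an invertible $(1,1)$-entry, completing the square as above makes it congruent to a diagonal matrix.

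The bulk of the argument is routine bookkeeping with the involution and the scalar $\eps$ (chiefly keeping $\eps^\ast=\eps^{-1}$ straight when moving $\ast$ past block-triangular factors), and I anticipate no real trouble there. The one conceptual step is the last one: observing that a congruent copy of $H$ can be made to have a nonzero diagonal entry exactly when $S_\eps(D)\ne\{0\}$, and that the excluded case $\eps=-1$, $\ast|_D=\id$ is precisely where this breaks down — there $bd+\eps(bd)^\ast=bd-bd=0$ for every $d$, and indeed $H$ is not diagonalizable at all. This is why the hypothesis of the proposition takes the form it does.
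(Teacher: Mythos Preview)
Your argument is correct and is essentially the paper's own proof: both proceed by induction on $n$ via the Schur-complement congruence (the paper just writes down the block identity, while you spell out the two cases according to whether a nonzero diagonal pivot exists), and the paper's ``short computation'' for the second claim is exactly your choice of $d=b^{-1}$ (resp.\ $d=b^{-1}c$ with $c\ne c^\ast$) to force a nonzero $(1,1)$-entry $bd+\eps(bd)^\ast$ in a congruent copy of $H$.
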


\begin{proof} The first claim is proved by induction on $n$ using the identity
\[
\left[ \begin{array}{cc} I & 0 \\ -B_{21}B_{11}^{-1} & I \end{array} \right]
\left[ \begin{array}{cc} B_{11} & B_{12} \\ B_{21} & B_{22} \end{array} \right]
\left[ \begin{array}{cc} I & -B_{11}^{-1} B_{12} \\ 0 & I \end{array} \right] =
\left[ \begin{array}{cc} B_{11} & 0 \\ 0 & \tilde{B}_{11} \end{array} \right]
\]
where $\tilde{B}_{11} = B_{22}-B_{21}B_{11}^{-1}B_{12}$ is 
the Schur complement of $B_{11}$. The second claim is just a short computation.
\end{proof}

Now, we can prove the theorem.

\begin{proof}
Let us start with the case $\eps=-1$ and $\ast|_D = \id$. In this case $D=F$ and $2S_\eps(D)=0$.
Let $M$ be an $\eps$-hermitian cone on $M_n(D)$. Pick any $C \in M$. By Proposition \ref{diag},
there exists an invertible matrix $P$ such that 
\[
P^\ast C P = \bigoplus_{i=1}^r [a_i] \oplus \bigoplus_{j=1}^s 
\left[ \begin{array}{cc}0 & b_j \\ -b_j & 0 \end{array} \right],
\]
where $a_i \in S_\eps(D)$ and $b_j \in D$. Note that the matrix
\[
Q = \bigoplus_{i=1}^r [1] \oplus \bigoplus_{j=1}^s 
\left[ \begin{array}{cc}0 & 1 \\ 1 & 0 \end{array} \right]
\]
is invertible and $Q^\ast P^\ast C P Q=-P^\ast C  P$. The latter follows from
\[
[a_i] = -[a_i] \quad \mbox{and} \quad
\left[ \begin{array}{cc}0 & 1 \\ 1 & 0 \end{array} \right]
\left[ \begin{array}{cc}0 & b_j \\ -b_j & 0 \end{array} \right]
\left[ \begin{array}{cc}0 & 1 \\ 1 & 0 \end{array} \right] =
- \left[ \begin{array}{cc}0 & b_j \\ -b_j & 0 \end{array} \right].
\]
It follows that $P^\ast C P \in M \cap -M =\{0\}$, so that $C=0$. Hence
$\{0\}$ is the only $\eps$-hermitian cone on $M_n(D)$. For $n=1$ we get
that $\{0\}$ is also the only $\eps$-hermitian cone on $D$.

From now on we assume that either $\eps \ne -1$ or $\ast|_D \ne \id$. Therefore,
every $\eps$-hermitian matrix is congruent to a diagonal matrix.

For every $\eps$-hermitian cone $N$ of $D$ write
\[ 
F(N) = \{A \in S_\eps(M_n(D)) \vert \ x A x^\ast \in N \text{ for every } x \in D^n\}.
\]
We claim that $F(N)$ is an $\eps$-hermitian cone on $M_n(D)$. If $A,B \in F(N)$, then
$x(A+B)x^\ast = xAx^\ast+xBx^\ast \in N$ for every $x \in D^n$. Hence $A+B \in F(N)$.
If $A \in F(N)$ and $B \in M_n(D)$, then $xBAB^\ast x^\ast =(xB)A(xB)^\ast \in N$
for every $x \in D^n$. Hence $BAB^\ast \in F(N)$. If $N$ is proper, then $F(N)$ is also
proper. Namely, if $A \in F(N) \cap -F(N)$, then $xAx^\ast \in N \cap -N = \{0\}$ for 
every $x \in D^n$. Let $A'$ be the diagonal matrix congruent to $A$. Then $xA'x^\ast=0$
for every $x \in D^n$, so that $A'=0$. Hence, $A=0$.

For every $\eps$-hermitian cone $M$ on $M_n(D)$ write 
\[
G(M) = \{c \in S_\eps(D) \vert \ cE_{11} \in M\}.
\]
We claim that $G(M)$ is an $\eps$-hermitian cone on $D$.
If $a,b \in G(M)$, then $(a+b)E_{11}=aE_{11}+bE_{11} \in M$, hence $a+b \in G(M)$.
If $a \in G(M)$ and $d \in D$ then $(dad^\ast)E_{11}=(dE_{11})(aE_{11})(dE_{11})^\ast \in M$,
hence $dad^\ast \in G(M)$. If $M$ is proper, then $G(M)$ is also proper. Namely, if 
$a \in G(M) \cap -G(M)$, then $aE_{11} \in M \cap -M = \{0\}$. Hence $aE_{11}=0$.

We claim that for every $\eps$-hermitian cone $N$ on $D$ we have that $G(F(N)) = N$.
If $a \in G(F(N))$, then $aE_{11} \in F(N)$. Then $x(aE_{11})x^\ast \in N$ for every
$x \in D^n$. In particular, for $x = (1,0,\ldots,0)$, we get that $a \in N$.
To prove the opposite inclusion pick any $a \in N$. Then $aE_{11} \in F(N)$
since $x(aE_{11})x^\ast = x_1 a x_1^\ast \in N$ for every $x \in D^n$.
It follows that $a \in G(F(N))$.

We claim that for every $\eps$-hermitian cone $M$ on $M_n(D)$ we have that $F(G(M)) = M$.
Since every $\eps$-hermitian matrix is congruent to a diagonal matrix, it suffices to show that
a diagonal matrix belongs to $F(G(M))$ if and only if it belongs to $M$.
Pick a diagonal matrix $A \in F(G(M))$. Then for every $x \in D^n$, $xAx^\ast \in G(M)$.
In particular $a_{11},\ldots,a_{nn} \in G(M)$. It follows that 
$a_{11}E_{11},\ldots,a_{nn}E_{11} \in M$. Hence 
$A = \sum_j P_{1j}(a_{jj}E_{11})P_{1j}^\ast \in M$.
To prove the opposite inclusion, pick $A \in M$. It follows that 
$a_{jj}E_{11} = E_{1j}AE_{1j}^\ast \in M$ for every $j$.
Hence $a_{11},\ldots,a_{nn} \in G(M)$. It follows that
$xAx^\ast = \sum_j x_j a_{jj} x_j^\ast \in G(M)$ for every
$x \in D^n$. Hence $A \in F(G(N))$.
\end{proof}

Let $R=M_n(D)$ be a central simple $F$-algebra with $\cha F \ne 2$. The following is a summary
of \cite{invo}, Theorem 3.1 and Proposition 2.20: For every involution $\#$ on $M_n(D)$ there exists  
an involution $\ast$ on $D$ such that  $\# |_F = \ast|_F$ and an invertible matrix $A \in M_n(D)$ 
such that $A^\ast=\pm A$ and $X^\# = A^{-1} X^\ast A$ for every matrix $X \in M_n(D)$ 
(where $[x_{ij}]^\ast=[x_{ji}^\ast]$). If $\ast|_F \ne \id$ we can assume $A^\ast=A$.

Our next result extends Theorem \ref{main} to arbitrary involutions.

\begin{thm}
\label{arbitrary}
Let $D$ be a central division $F$-algebra with involution $\ast$
and $\eta \in F$ such that $\eta\eta^\ast=1$. Let $n$ be an integer
and $A \in S_\eta(M_n(D))$ an invertible matrix. Then $X^\# = A^{-1}X^*A$ 
is also an involution on $M_n(D)$. For every $\eps \in F$ such that
$\eps \eps^\ast=1$ there exists a one-to-one correspondence between:
\begin{enumerate}
\item $\eps$-hermitian cones on $(M_n(D),\#)$,
\item $\eps \eta$-hermitian cones on $(M_n(D),\ast)$,
\item $\eps \eta$-hermitian cones on $D$.
\end{enumerate}
\end{thm}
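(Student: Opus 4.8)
The claim is that Theorem~\ref{arbitrary} bootstraps Theorem~\ref{main} by first reducing an arbitrary involution $\#$ on $M_n(D)$ to the transpose involution $\ast$, and then invoking Theorem~\ref{main} to pass from $M_n(D)$ down to $D$. The plan is to establish the equivalence (1)$\Leftrightarrow$(2) directly and to quote Theorem~\ref{main} for (2)$\Leftrightarrow$(3).

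First I would verify the preliminary assertion that $X^\# = A^{-1}X^\ast A$ is genuinely an involution: it is clearly $F$-linear and anti-multiplicative, and $(X^\#)^\# = A^{-1}(A^{-1}X^\ast A)^\ast A = A^{-1} A^\ast X (A^\ast)^{-1} A$, which equals $X$ precisely because $A^\ast = \eta A$ forces $A^\ast = \eta A$ and $(A^\ast)^{-1} = \eta^{-1}A^{-1} = \eta^\ast A^{-1}$ (using $\eta\eta^\ast = 1$), so the two scalars cancel. Next, the heart of (1)$\Leftrightarrow$(2) is the observation that $\eps$-hermitian elements for $\#$ and $\eps\eta$-hermitian elements for $\ast$ are matched by the bijection $Y \mapsto AY$ (equivalently $X \mapsto A^{-1}X$): if $\eps Y^\# = Y$, i.e. $\eps A^{-1}Y^\ast A = Y$, then setting $X = AY$ one computes $\eps\eta X^\ast = \eps\eta (AY)^\ast = \eps\eta Y^\ast A^\ast = \eps\eta Y^\ast (\eta A) $ — I must be slightly careful with the side on which $\eta$ sits, but since $\eta \in F$ is central this is just $\eps\eta^2 Y^\ast A$, and one checks this equals $AY = X$ using $\eps A^{-1}Y^\ast A = Y$ and $\eta\eta^\ast=1$; the exact scalar bookkeeping is the one routine computation here. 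So $\sigma\colon S_{\eps\eta}(M_n(D),\ast) \to S_\eps(M_n(D),\#)$, $\sigma(X) = A^{-1}X$, is an $F$-linear bijection.

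The main thing to check is that $\sigma$ carries $\eps\eta$-hermitian cones for $\ast$ to $\eps$-hermitian cones for $\#$, i.e. that the congruence action is respected. For $X \in S_{\eps\eta}(M_n(D),\ast)$ and $B \in M_n(D)$ we want $\sigma(\,\cdot\,)$ to intertwine the $\ast$-congruence $X \mapsto BXB^\ast$ with the $\#$-congruence $Y \mapsto BYB^\#$. Compute $BYB^\# = B(A^{-1}X)(A^{-1}B^\ast A) = BA^{-1}XA^{-1}B^\ast A$; this does \emph{not} obviously have the form $A^{-1}(B'X(B')^\ast)$ for a single $B'$, so the naive map $\sigma$ needs adjusting — the correct dictionary is to conjugate the \emph{acting} matrix as well. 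Concretely, set $\sigma(X) = A^{-1}X$ and note $BYB^\# = A^{-1}\big((AB A^{-1})\,X\,(ABA^{-1})^\ast\big)$ provided $(ABA^{-1})^\ast A^{-1} = A^{-1}B^\ast A \cdot$(scalar), which again follows from $A^\ast = \eta A$. Thus $B \mapsto ABA^{-1}$ is a bijection of $M_n(D)$, and $\sigma$ converts the orbit of $X$ under $\ast$-congruence into the orbit of $\sigma(X)$ under $\#$-congruence; closure under addition is immediate from $F$-linearity, and $M \cap -M = \{0\}$ transfers because $\sigma$ is a linear bijection. Running the argument with $B \mapsto A^{-1}BA$ gives the inverse correspondence, so (1)$\Leftrightarrow$(2) is a bijection of cones.

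I expect the scalar/side bookkeeping with $\eta$ and $\eta^\ast$ — making sure that the $\eta$ produced by transporting $A^\ast = \eta A$ through the various conjugations combines correctly with $\eps$ to land exactly on the $\eps$-hermitian (respectively $\eps\eta$-hermitian) symmetry condition — to be the only real friction; everything else is formal. Finally, (2)$\Leftrightarrow$(3) is Theorem~\ref{main} applied verbatim with its "$\eps$" taken to be $\eps\eta$ (which satisfies $(\eps\eta)(\eps\eta)^\ast = \eps\eps^\ast\eta\eta^\ast = 1$), giving the bijection between $\eps\eta$-hermitian cones on $(M_n(D),\ast)$ and on $D$. Composing the two bijections yields the claimed three-way correspondence.
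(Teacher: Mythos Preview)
Your approach matches the paper's: both use the linear bijection $X\mapsto AX$ between $S_\eps(M_n(D),\#)$ and $S_{\eps\eta}(M_n(D),\ast)$ for (1)$\Leftrightarrow$(2), and then invoke Theorem~\ref{main} with parameter $\eps\eta$ for (2)$\Leftrightarrow$(3). The paper's congruence check is cleaner than your $B\mapsto ABA^{-1}$ detour: writing $\phi(X)=AX$ and using the congruence in the form $Y^\# X Y$, one gets $\phi(Y^\# X Y)=A(A^{-1}Y^\ast A)XY=Y^\ast\phi(X)Y$ directly, with the \emph{same} $Y$ on both sides; also note that $A\in S_\eta$ means $\eta A^\ast=A$, i.e.\ $A^\ast=\eta^\ast A$, not $A^\ast=\eta A$ as you wrote, and fixing this sign removes the ``scalar friction'' you anticipated.
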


\begin{proof}
Let $\phi \colon M_n(D) \to M_n(D)$ be the mapping defined by $\phi(X)=AX$.
A short computation shows that $\phi$ induces a one-to-one correspondence
between the set $S_\eps(M_n(D),\#)$ and the set $S_{\eps \eta}(M_n(D),*)$.
Moreover $\phi$ is additive and for every $X \in S_\eps(M_n(D),\#)$ and $Y \in M_n(D)$
we have that $\phi(Y^\# X Y)=Y^\ast \phi(X) Y$. 
Therefore the mappings $M \to \phi(M)$ and $N \to \phi^{-1}(N)$
give a one-to-one correspondence between (1) and (2) for every $\eps$.
The mappings $F$ and $G$ from Theorem \ref{main} give a 
one-to-one correspondence between (2) and (3).
\end{proof}

As a corollary of Theorem \ref{arbitrary}, we obtain
a characterization of formally real involutions on central simple algebras.

\begin{cor}
\label{firstcorr}
Let $D,F,\ast,\eta,n,A,\#$ be as in Theorem \ref{arbitrary}. If either $\eta \ne -1$ or $\ast|_D \ne \id$,
then the following assertions are equivalent:
\begin{enumerate}
\item $\#$ is formally real,
\item there exists a $1$-hermitian cone on $(M_n(D),\#)$ which contains the identity matrix $I$,
\item there exists an $\eta$-hermitian cone on $(M_n(D),\ast)$ which contains the matrix $A$,
\item there exists an $\eta$-hermitian cone on $(D,\ast)$ which contains all elements in some diagonal
representation of $A$.
\end{enumerate}
If $\eta=-1$ and $\ast|_D = \id$ then $\#$ is not formally real.
\end{cor}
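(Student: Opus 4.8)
The plan is to reduce everything to Theorem \ref{arbitrary} applied with $\eps=1$, since then $\eps\eta=\eta$ and we get a one-to-one correspondence between $1$-hermitian cones on $(M_n(D),\#)$, $\eta$-hermitian cones on $(M_n(D),\ast)$, and $\eta$-hermitian cones on $(D,\ast)$. The first step is to connect formal reality of $\#$ with the existence of a $1$-hermitian cone containing $I$. Recall that $r^\# r$ is always $1$-hermitian for $\#$ (indeed $(r^\# r)^\# = r^\# r^{\#\#} = r^\# r$), and in fact $r^\# r = r^\# I r$, so the set $\Sigma$ of all finite sums $\sum_i r_i^\# r_i$ with $r_i\in M_n(D)$ is closed under addition and under $x\mapsto x^\# \Sigma x$, and contains $I$. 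Thus $\Sigma$ is a $1$-hermitian cone containing $I$ precisely when $\Sigma\cap-\Sigma=\{0\}$, i.e. precisely when no nontrivial sum $\sum r_i^\# r_i$ vanishes — and since $S_1(M_n(D),\#)$ is a real vector space over the formally real part of $F$ in the relevant cases, one checks (this is the content of the informal ``Corollary \ref{firstcorr}'' remark at the start of the section) that $\Sigma\cap-\Sigma=\{0\}$ is equivalent to $\#$ being formally real. Conversely, if \emph{any} $1$-hermitian cone $M$ contains $I$, then it contains $r^\# I r = r^\# r$ for all $r$ and hence contains $\Sigma$, forcing $\Sigma\cap-\Sigma\subseteq M\cap-M=\{0\}$, so $\#$ is formally real. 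This gives (1)$\Leftrightarrow$(2).

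The equivalence (2)$\Leftrightarrow$(3) is then immediate from the correspondence $M\mapsto\phi(M)$ of Theorem \ref{arbitrary} with $\phi(X)=AX$: a $1$-hermitian cone on $(M_n(D),\#)$ contains $I$ if and only if its image under $\phi$ contains $\phi(I)=A$. For (3)$\Leftrightarrow$(4), I would invoke the correspondence $F,G$ between $\eta$-hermitian cones on $(M_n(D),\ast)$ and on $D$, together with Proposition \ref{diag}: under the hypothesis $\eta\ne-1$ or $\ast|_D\ne\id$, the matrix $A$ is congruent (via $\ast$) to a diagonal matrix $P^\ast A P=\bigoplus[a_i]$ with $a_i\in S_\eta(D)$. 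Since an $\eta$-hermitian cone $M$ on $M_n(D)$ is closed under $X\mapsto P^\ast X P$, it contains $A$ iff it contains $\bigoplus[a_i]$, and by the $E_{1j}$-conjugation arguments used in the proof of Theorem \ref{main} this holds iff each $a_iE_{11}\in M$, i.e. iff each $a_i\in G(M)$. Tracking the bijection $G$ converts this into the statement that the corresponding cone on $D$ contains all the $a_i$, which is exactly (4). (One should note here that any two diagonal representations of $A$ are congruent over $D$, so the choice of diagonalization is immaterial.)

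For the last sentence, when $\eta=-1$ and $\ast|_D=\id$ — so $D=F$, $2S_{-1}(D)=0$, and by the first paragraph of the proof of Theorem \ref{main} the only $(-1)$-hermitian cone on $M_n(D)$ is $\{0\}$ — Theorem \ref{arbitrary} with $\eps=1$ shows the only $1$-hermitian cone on $(M_n(D),\#)$ is $\{0\}$, which does not contain $I$; by (1)$\Leftrightarrow$(2) (whose proof does not use the standing hypothesis) $\#$ is not formally real. The main obstacle is the careful bookkeeping in (1)$\Leftrightarrow$(2): one must verify that $\Sigma$, the set of sums of hermitian squares, really is the \emph{smallest} $1$-hermitian cone containing $I$ when it is proper, and that its properness is genuinely equivalent to the nonvanishing condition defining formal reality — in particular handling the fact that not every $1$-hermitian element need be a sum of hermitian squares, so the two directions of the equivalence use the cone axioms in slightly different ways.
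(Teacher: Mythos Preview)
Your proof is correct and follows precisely the route the paper intends: the corollary is stated without proof in the paper, as an immediate consequence of Theorem \ref{arbitrary} (with $\eps=1$) together with the diagonalization in Proposition \ref{diag} and the explicit $F,G$ maps from the proof of Theorem \ref{main}; you have simply spelled out those details. Your handling of (1)$\Leftrightarrow$(2) via the minimal cone $\Sigma=\{\sum r_i^\# r_i\}$ is the standard argument and, as you note, does not use the hypothesis on $\eta$ and $\ast|_D$, so it also cleanly yields the final sentence. The only unnecessary remark is the aside about $S_1(M_n(D),\#)$ being a real vector space; the equivalence $\Sigma\cap-\Sigma=\{0\}\Leftrightarrow\#$ formally real is a direct manipulation of sums (as you in fact carry out) and needs no such structure.
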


A $1$-hermitian cone containing $1$ will be called \textit{unital hermitian cone} in the sequel.
If $A$ has a nonzero hermitian square on its diagonal (e.g. $1$) then every $1$-hermitian cone
on $(M_n(D),\ast)$ which contains the matrix $A$ is a unital hermitian cone. If $D$ is a division
algebra admitting a unital hermitian cone then $\cha K =0$ for every subfield $K$ of $D$.

\section{Extensions of involutions from $D$ to $D \otimes K$.}
\label{secinvo}

Let $D$ be a central division $F$-algebra and $K$ a maximal subfield of $D$.
Every involution $\ast$ on $D$ such that $K^\ast \subseteq K$ extends to an involution
on $D \otimes_F K$. This is clear if $\ast$ is of the first kind (i.e. $\ast|_F =\id$).
If $\ast$ is of the second kind (i.e. $\ast|_F \ne \id$), one has to observe first that
$D \otimes_F K$ is isomorphic to $D \otimes_{F_0} K_0$ where $K_0 = S_1(K)=\sym(K)$
and $F_0 =S_1(F)=\sym(F)$ are the symmetric parts of $K$ and $F$. The aim of this section
is to give an explicit construction of the extension which will be used in later
sections. From now on we assume that $\cha K = 0$.

\begin{lem}
For every involution $\ast$ on $D$ there exists a maximal (i.e. self-centralizing) 
subfield of $D$ which is $\ast$-invariant.
\end{lem}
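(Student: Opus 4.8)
The plan is to prove the statement by induction on $\dim_F D$, lowering the degree by replacing $D$ with the centralizer of a well-chosen $\ast$-invariant subfield. If $D=F$ there is nothing to do: $F$ is self-centralizing in $D$ and $F^\ast=F$. So from now on assume $D\neq F$; note that $\cha F=0$ since $F$ embeds into the subfield $K$.

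The first step is to exhibit a $\ast$-invariant subfield $E$ of $D$ that properly contains $F$. Because $\cha F\neq 2$, the anti-automorphism $x\mapsto x^\ast$ decomposes $D$ as the direct sum of its $+1$- and $-1$-eigenspaces $D_+$ and $D_-$. Since $D\neq Z(D)=F$, these two eigenspaces cannot both lie inside $F$, so we may pick $a\in(D_+\cup D_-)\setminus F$. Then $a^\ast=\pm a\in F[a]$ and $F^\ast=F$, so $E:=F[a]$ is stable under $\ast$; moreover $E$ is a field (a finite-dimensional commutative domain over $F$) and $E\neq F$.

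Next I pass to $C:=C_D(E)$, the centralizer of $E$ in $D$. A one-line computation with the anti-automorphism law shows $C^\ast=C$: if $x$ commutes with every $e\in E$, then $x^\ast$ commutes with every $e^\ast$, and $e\mapsto e^\ast$ maps $E$ onto $E$. Also $E\subseteq Z(C)$ (since $C_D(e)\supseteq C_D(E)$ for each $e\in E$), and the double centralizer theorem gives $\dim_F C=\dim_F D/[E:F]<\dim_F D$, hence $\dim_{Z(C)}C\le\dim_F C<\dim_F D$. Thus $(C,\ast|_C)$ is a central division algebra over $Z(C)$ carrying an involution and of strictly smaller dimension over its center, so the induction hypothesis supplies a self-centralizing subfield $L$ of $C$ with $L^\ast=L$.

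It remains to verify that $L$ does the job in $D$. As a maximal subfield of $C$, $L$ contains $Z(C)$, hence $E$, hence $F$; in particular $L$ is a subfield of $D$ containing $E$. From $L\supseteq E$ we get $C_D(L)\subseteq C_D(E)=C$, so $C_D(L)=C_C(L)=L$, that is, $L$ is self-centralizing in $D$, and $L^\ast=L$ by construction. The one point that needs care -- the \emph{main obstacle} -- is the bookkeeping in the induction: $C$ is in general not central over $F$, so the induction must be carried out over all pairs (division algebra, involution) ordered by the dimension over the center, and the double centralizer theorem is exactly what guarantees that this dimension strictly decreases. (One can also avoid an explicit induction: take $L$ maximal among $\ast$-invariant subfields of $D$ and, if $C_D(L)\neq L$, apply the eigenspace argument above inside the $\ast$-stable division algebra $C_D(L)$ to enlarge $L$, a contradiction.)
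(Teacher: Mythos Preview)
Your proof is correct. The inductive organization you chose is a genuinely different route from the paper's: the paper instead picks a subfield $L$ maximal among $\ast$-invariant subfields and then shows $C_D(L)=L$ directly, via the auxiliary claim that \emph{a central simple algebra with involution in which every normal element is central is a field}. From that claim one gets, if $C_D(L)\neq L$, a noncentral normal element $d\in C_D(L)$, and then $L(d,d^\ast)$ is a strictly larger $\ast$-invariant subfield, a contradiction. Your parenthetical alternative at the end is essentially this argument, with your eigenspace observation (a noncentral symmetric or antisymmetric element exists) playing the role of the paper's normal-element claim. The trade-offs: the paper's argument avoids the dimension bookkeeping you flagged as the main obstacle, and its normal-element claim works without assuming $\cha\neq 2$; your inductive approach is perhaps more constructive in spirit and reuses the Double Centralizer Theorem in a pleasant way to control $\dim_{Z(C)}C$. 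One cosmetic remark: your sentence ``$\cha F=0$ since $F$ embeds into the subfield $K$'' is slightly garbled (there is no $K$ yet in the lemma's statement); what you need is simply the standing hypothesis $\cha F\neq 2$ so that $D=D_+\oplus D_-$.
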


\begin{proof} We need the following claim:
\textit{A central simple algebra with involution in which every normal element is central is a field.}

Let $A$ be a central simple algebra in which every normal element is central. 
Pick any element $a \in A$. The element $\alpha = a+a^\ast$ is symmetric, hence normal. 
By the assumption $\alpha \in Z(A)$. It follows that $aa^\ast = a(\alpha-a) = (\alpha-a)a = a^\ast a$.
Since $a$ is normal, it is central by the assumption. Hence $A = Z(A)$.

Suppose now that $L$ is a $\ast$-subfield of a division algebra $D$ which is not contained
in any other $\ast$-subfield. Its centralizer $A = C_D(L)$ is also $\ast$-invariant and it contains $L$. 
By the Double Centralizer Theorem, $A$ is a simple algebra with $Z(A)=L$. If $A \ne L$,
then $A$ contains a noncentral normal element $d$ by the claim.  Then $L(d,d^\ast)$ is also a 
$\ast$-subfield of $D$ properly containing $L$, contrary to the choice of $L$. Therefore $A=L$.
\end{proof}

\begin{lem}
\label{anti}
Every finite extension of fields with involution is generated
by either a symmetric or an antisymmetric element.
(We assume that both fields have characteristic zero.)
\end{lem}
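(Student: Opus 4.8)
The plan is to produce the generator by a covering argument: among all symmetric (or all antisymmetric) elements, only finitely many ``forbidden'' subspaces fail to generate, and an infinite ground field cannot be the union of finitely many proper subspaces.

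Write the extension as $L/K$, with the involution $\ast$ on $L$ restricting to the one on $K$. Let $L_+=\{x\in L: x^\ast=x\}$ and $L_-=\{x\in L: x^\ast=-x\}$ be the symmetric and antisymmetric elements, and set $K_0=\sym(K)$. Since $\cha L=0$, the extension $L/K$ is separable, so it has only finitely many intermediate fields, and $K_0\supseteq\QQ$ is infinite. I will lean on two facts: (a) $L_+$ is the fixed field of $\ast|_L$, hence a subfield with $[L:L_+]\le 2$; and (b) both $L_+$ and $L_-$ are $K_0$-subspaces of $L$, since for $k\in K$ and $x\in L_\pm$ one has $(kx)^\ast=k^\ast x^\ast=\pm\,k^\ast x$, which lands back in $L_\pm$ exactly when $k^\ast=k$.

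The key dichotomy is whether $K(L_+)=L$. Because $L_+\subseteq K(L_+)\subseteq L$ and $[L:L_+]\le 2$, the intermediate field $K(L_+)$ is either $L_+$ or $L$, and it equals $L_+$ precisely when $K\subseteq L_+$, i.e. when $\ast|_K=\id$. In the first case, $K(L_+)=L$, I look for a symmetric generator inside the $K_0$-space $V=L_+$. In the remaining case ($\ast|_L\ne\id$ and $\ast|_K=\id$, so $K_0=K$) I look for an antisymmetric generator inside the $K$-space $V=L_-$, after checking $K(L_-)=L$: fixing a nonzero $a_0\in L_-$, one has $L_-=L_+a_0$ and $a_0^2\in L_+$, so $K(L_-)$ contains both $a_0$ and, via products $(wa_0)a_0=wa_0^2$ for $w\in L_+$, all of $L_+$; hence $K(L_-)=L$. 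In either case $V$ is a nonzero $K_0$-vector space with $K(V)=L$, all of whose elements are symmetric (resp. antisymmetric).

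An element $v\in V$ fails to generate exactly when it lies in some proper intermediate field $M$, $K\subseteq M\subsetneq L$; but $M\cap V$ is a $K_0$-subspace of $V$, and it is \emph{proper}, since $V\subseteq M$ would give $L=K(V)\subseteq M$, contradicting $M\ne L$. As there are finitely many such $M$ and $K_0$ is infinite, the finite union $\bigcup_M(M\cap V)$ cannot exhaust $V$, and any $v$ outside it is a symmetric (resp. antisymmetric) generator of $L/K$. The only real obstacle is the bookkeeping of scalar fields: symmetric and antisymmetric elements are stable only under multiplication by $\sym(K)$, not by all of $K$, so the covering must be run over $K_0=\sym(K)$; the identifications $L_-=L_+a_0$ and $K(L_-)=L$ are then short computations of the kind already used in Proposition \ref{diag}.
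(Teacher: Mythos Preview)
Your proof is correct but takes a different route from the paper's. The paper works directly with the Primitive Element Theorem over $F_0=\sym(F)$: picking $\theta$ with $K=F_0(\theta)$, either $\ast|_K=\id$ and $\theta$ is already symmetric, or else the two antisymmetric elements $\theta-\theta^\ast$ and $(\theta-\theta^\ast)(\theta+\theta^\ast)$ visibly generate $K$ over $F_0$, and a second application of PET replaces them by a single antisymmetric $F_0$-linear combination. Your argument is the ``avoid finitely many proper subspaces'' version of the primitive element theorem: you identify a $K_0$-subspace $V$ (either $L_+$ or $L_-$) with $K(V)=L$, and then use that an infinite-field vector space is not a finite union of proper subspaces to dodge the finitely many intermediate fields. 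The paper's approach is more constructive and, as a byproduct, shows that whenever $\ast|_L\ne\id$ one can always take an \emph{antisymmetric} generator (not only when $\ast|_K=\id$); your approach is cleaner structurally and makes transparent that the only obstruction to a symmetric generator is the case $\ast|_K=\id\ne\ast|_L$. Your bookkeeping over $K_0$ rather than $K$ is exactly right, and the verification that $K(L_-)=L$ via $L_-=L_+a_0$ is fine.
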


\begin{proof}
Let $K/F$ be a finite extension of fields with involution. 
This means that $K$ is a field with involution $\ast$, 
$F^\ast \subseteq F$ and $K/F$ is a finite field extension.
Write $F_0 = \sym(F)$ and $K_0 = \sym(K)$. By the Primitive Element Theorem,
there exists an element $\theta$ such that $K = F_0(\theta)$. 
If $K=K_0$ then $\theta$ is symmetric and we are done. If $K \ne K_0$, 
then $K = F_0(\theta-\theta^\ast,(\theta-\theta^\ast)(\theta+\theta^\ast))$
and both generators are antisymmetric. By the Primitive Element Theorem, there exists
an $F_0$-linear combination $\theta'$ of them such that $K=F_0(\theta')$. Hence,
$\theta'$ is antisymmetric and $K=F(\theta')$. It is also interesting to note that
if $F \ne F_0$ then $K/F$ is generated by a symmetric element. 
Namely, $K=F(k \theta')$, where $k$ is an antisymmetric generator of $F/F_0$.
\end{proof}

\begin{prop}
Let $K$ be a maximal $\ast$-subfield of a division algebra $D$.
There exists a unital, hermitian and $K-K$ bilinear mapping $f \colon D \to K$.
\end{prop}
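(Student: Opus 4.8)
The plan is to obtain $f$ by symmetrizing a bimodule projection. First I would regard $D$ as a left module over the commutative $F$-algebra $K \otimes_F K$ via $(k_1 \otimes k_2)\cdot x = k_1 x k_2$; this is well defined and associative precisely because $K$ is commutative. With this structure the $(K\otimes_F K)$-submodules of $D$ are exactly the $K$--$K$ sub-bimodules, and a map $D \to K$ is $(K\otimes_F K)$-linear if and only if it is $K-K$ bilinear. The line $K = K\cdot 1 \subseteq D$ is a submodule, since $(k_1\otimes k_2)\cdot 1 = k_1 k_2 \in K$, so a unital $K-K$ bilinear map $D \to K$ is the same thing as a $(K\otimes_F K)$-linear retraction of the inclusion $K \hookrightarrow D$.

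The key step is that $K \otimes_F K$ is semisimple. Since $\cha K = 0$, the extension $K/F$ is finite and separable; writing $K = F(\theta)$ with separable minimal polynomial $p$ and factoring $p = \prod_i p_i$ into distinct irreducibles over $K$, the Chinese Remainder Theorem gives $K \otimes_F K \cong K[t]/(p) \cong \prod_i K[t]/(p_i)$, a finite product of fields. Hence $K \otimes_F K$ is semisimple Artinian, $D$ is a semisimple $(K\otimes_F K)$-module, and the submodule $K \subseteq D$ has a complement: $D = K \oplus M$ with $M$ a $K-K$ sub-bimodule. Let $f_0 \colon D \to K$ be the projection along $M$. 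Then $f_0$ is $K-K$ bilinear and $f_0(1) = 1$.

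It remains to make $f_0$ hermitian. Since $K$ is $\ast$-invariant and $2$ is invertible in $K$, I would set
\[
f(x) = \frac{1}{2}\bigl(f_0(x) + f_0(x^\ast)^\ast\bigr).
\]
Then $f(1)=1$ because $1^\ast = 1$; $f(x^\ast) = f(x)^\ast$ by inspection; and $f$ is still $K-K$ bilinear, using that $K$ is commutative and $\ast$-invariant, so $(kxk')^\ast = (k')^\ast x^\ast k^\ast$ and $\ast$ maps $K$ into $K$. (For instance $f(kx) = \frac{1}{2}(k f_0(x) + (f_0(x^\ast)k^\ast)^\ast) = \frac{1}{2}(k f_0(x) + k f_0(x^\ast)^\ast) = k f(x)$, and similarly on the right.) Thus $f$ is unital, hermitian, and $K-K$ bilinear, as required.

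I do not expect a genuine obstacle: this is Maschke-style averaging, and its only nontrivial ingredient, the semisimplicity of $K \otimes_F K$, is automatic in characteristic $0$. The points to keep in mind are that $K$ need not be normal over $F$ — so one should split $K \otimes_F K$ via separability rather than via a Galois action — and that for involutions of the second kind $\ast$ also moves $F$; but this causes no difficulty since $K$ is assumed $\ast$-invariant.
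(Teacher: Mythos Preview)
Your argument is correct. The key ingredients---semisimplicity of $K\otimes_F K$ in characteristic zero, the resulting bimodule complement of $K$ in $D$, and the averaging $f = \tfrac12(f_0 + \ast\circ f_0\circ\ast)$---are all sound, and your verification of $K$--$K$ bilinearity after symmetrization is accurate. One remark you could add: since $K$ is self-centralizing in $D$, the primitive idempotent of $K\otimes_F K$ corresponding to the multiplication map $a\otimes b\mapsto ab$ cuts out exactly $K$ inside $D$, so the complement $M$ (and hence $f_0$) is in fact unique; the symmetrization step is then automatically trivial, though of course you cannot assume this in advance.

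The paper takes a different route: it invokes the preceding lemma to choose a primitive element $x$ of $K$ over $F_0=\sym(F)$ that is either symmetric or antisymmetric, and then writes down an explicit formula
\[
f(z)=\frac{-1}{\chi'(x)\,x^n}\sum_{i=0}^{n-1} x^i z\, y_i,
\]
where $\chi$ is the minimal polynomial of $x$ and the $y_i$ are certain polynomials in $x$. One then checks directly that $f(z)$ commutes with $x$ (hence lies in $K$), that $f(1)=1$, and that the hermitian property follows from the symmetry or antisymmetry of $x$ (in the antisymmetric case using that $\chi$ has only even powers). Your approach is cleaner and more conceptual, and avoids the case split on the parity of $x$; the paper's approach has the advantage of yielding a concrete formula one can evaluate in examples (as is done for quaternion algebras immediately afterward). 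For the applications in later sections only the abstract properties of $f$ are used, so either construction suffices.
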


\begin{proof}
Let $D$ be a division algebra with center $F$ and $K$ a maximal $\ast$-subfield in $D$.
Pick a symmetric or antisymmetric element $x \in K$ such that $K = F_0(x)$
and let $\chi(t) = t^n+a_{n-1}t^{n-1}+\ldots+a_1 t+a_0$ be its minimal $F_0$-polynomial.
Write
\[
\begin{array}{l}
y_0 = a_0 x^{n-1}, \\
y_1 = a_0 x^{n-2}+a_1 x^{n-1}, \\
y_2 = a_0 x^{n-3}+a_1 x^{n-2}+a_2 x^{n-1}, \\
\vdots \\
y_{n-2} = a_0 x+ a_1 x^2 +a_2 x^3+\ldots+a_{n-2} x^{n-1}, \\
y_{n-1} = a_0 + a_1 x+a_2 x^2+\ldots+a_{n-2}x^{n-2}+a_{n-1}x^{n-1} = -x^n.
\end{array}
\]
The mapping $f$ is defined by
\[
f(z)=\frac{-1}{\chi'(x) x^n} \sum_{i=0}^{n-1} x^i z y_i.
\]
The coefficient was chosen so that $f(1)=1$ as one can easily verify.
Clearly, $f$ is $K-K$ bilinear. A short computation shows that for every $z \in D$,
$xf(z) = f(z)x$. This relation implies that $f(z)$ commutes with all elements from $K$, 
hence it belongs to $K$. To prove that $f(z^\ast) = f(z)^\ast$ for every $z \in D$ one
has to distinguish the case when $x$ is symmetric from the case when $x$ is antisymmetric.
The symmetric case is easy. In the antisymmetric case we use the fact that $\chi(t)$ has only even powers.
\end{proof}

\begin{rem}
Note that $\tr = \tr_{K/F} \circ f$, where $\tr$ is the reduced trace. 
\end{rem}

Let $K$ be a maximal subfield of a division algebra $D$ and $e_1,\ldots,e_n$ a right $K$-basis of $D$.
Let $\lambda \colon D \to M_n(K)$ be the left regular representation defined by 
$a [e_1,\ldots,e_n] = [e_1,\ldots,e_n] \lambda(a)$ for $a \in D$ and let $j \colon K \to M_n(K)$
be the natural imbedding defined by $j(k)=kI$ where $I \in M_n(K)$ is the identity matrix.
Then the mapping $D \otimes_F K \to M_n(K)$ defined by $a \otimes k \mapsto \lambda(a)j(k)$
is an isomorphism.

\begin{prop}
\label{extend}
Setup from above. Let $\ast$ be an involution on $D$ which leaves $K$ invariant
and let $[a_{ij}]^\ast =[a_{ji}^\ast]$ be its extension to $M_n(K)$. Then the matrix 
$A = [f(e_i^\ast e_j)]_{i,j=1,\ldots,n} \in M_n(K)$ is nonsingular and hermitian (i.e. $A^\ast=A$) 
and the involution $\#$ on $M_n(K)$ defined by $X^\# = A^{-1}X^\ast A$ satisfies 
$\lambda(a^\ast)=\lambda(a)^\#$ for every $a \in D$.     
\end{prop}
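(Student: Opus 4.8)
The plan is to read off all three assertions directly from the properties of the map $f$ established in the previous proposition, namely that $f$ is $K$--$K$ bilinear, unital ($f(1)=1$), and hermitian ($f(z^\ast)=f(z)^\ast$ for all $z\in D$). The hermitian property of $A$ is immediate: writing $A_{ij}=f(e_i^\ast e_j)$ we get $(A^\ast)_{ij}=A_{ji}^\ast=f(e_j^\ast e_i)^\ast=f\bigl((e_j^\ast e_i)^\ast\bigr)=f(e_i^\ast e_j)=A_{ij}$, so $A^\ast=A$.

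For invertibility I would observe that $A$ is the Gram matrix, in the basis $e_1,\dots,e_n$, of the hermitian sesquilinear form $b\colon D\times D\to K$ given by $b(u,v)=f(u^\ast v)$; sesquilinearity ($b(uk,v)=k^\ast b(u,v)$ and $b(u,vk)=b(u,v)k$ for $k\in K$) and the hermitian symmetry $b(v,u)=b(u,v)^\ast$ follow from the $K$--$K$ bilinearity and from $f(z^\ast)=f(z)^\ast$. Suppose $A\eta=0$ for a column vector $\eta$ and put $v=\sum_j e_j\eta_j\in D$; then for each $i$ we have $0=(A\eta)_i=\sum_j f(e_i^\ast e_j)\eta_j=f(e_i^\ast v)$, and taking $K$-linear combinations gives $b(u,v)=f(u^\ast v)=0$ for every $u\in D$. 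If $v\neq 0$, then the choice $u=(v^{-1})^\ast$ yields $f(1)=b(u,v)=0$, contradicting $f(1)=1$; hence $v=0$, so $\eta=0$ and $A$ is nonsingular. (Here the division algebra hypothesis enters precisely once, to invert $v$.)

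The relation $\lambda(a^\ast)=\lambda(a)^\#$ is equivalent to $A\lambda(a^\ast)=\lambda(a)^\ast A$, which I would check entry by entry. Using $a^\ast e_j=\sum_k e_k\lambda(a^\ast)_{kj}$ and the right $K$-linearity of $f$, the $(i,j)$ entry of $A\lambda(a^\ast)$ equals $\sum_k f(e_i^\ast e_k)\lambda(a^\ast)_{kj}=f(e_i^\ast a^\ast e_j)=f\bigl((ae_i)^\ast e_j\bigr)$. On the other side, since $(\lambda(a)^\ast)_{ik}=\lambda(a)_{ki}^\ast$ lies in $K$, the left $K$-linearity of $f$ together with $ae_i=\sum_k e_k\lambda(a)_{ki}$ shows that the $(i,j)$ entry of $\lambda(a)^\ast A$ equals $\sum_k\lambda(a)_{ki}^\ast f(e_k^\ast e_j)=f\bigl((ae_i)^\ast e_j\bigr)$. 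The two entries coincide, which proves the identity and completes the proof.

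The main obstacle here is bookkeeping rather than a genuine difficulty: one must keep straight that $\lambda$ records the \emph{right} $K$-module structure of $D$ while the involution on $M_n(K)$ transposes and applies $\ast$ to the \emph{entries} (which lie in $K$), so left- versus right-linearity of $f$ has to be invoked on the correct side in each of the two computations above. Apart from the single use of the division algebra hypothesis for nondegeneracy and the normalization $f(1)=1$, everything is a formal consequence of the properties of $f$ and of the definitions of $\lambda$ and of the transpose involution on $M_n(K)$.
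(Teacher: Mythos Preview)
Your proof is correct and follows essentially the same approach as the paper: both use the hermitian property of $f$ for $A^\ast=A$, the nondegeneracy argument via $f(1)=1$ together with invertibility in $D$ for nonsingularity, and the verification of $A\lambda(a^\ast)=\lambda(a)^\ast A$ from the $K$--$K$ bilinearity of $f$. The only cosmetic difference is that the paper packages the last step as a single $D$-valued matrix identity before applying $f$ entrywise, whereas you carry out the same computation entry by entry.
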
 

\begin{proof}
Since $f$ is hermitian, it follows that $A$ is hermitian. If $A$ is singular, then there exists a vector 
$v = (\alpha_1,\ldots,\alpha_n) \in K^n$ such that $Av=0$. Since $f$ is right $K$-linear, it follows that
$f(e_i^\ast (\sum_{j=1}^n(e_j \alpha_j))=0$ for every $i$. Since $f$ is left $K$-linear, it follows that
$f((\sum_{i=1}^n e_i \beta_i)^\ast (\sum_{j=1}^n e_j \alpha_j))=0$ for any $\beta_1,\ldots,\beta_n \in K$.
Since $D$ is a division algebra we can pick $\beta_i$ so that 
$(\sum_{i=1}^n e_i \beta_i)^\ast (\sum_{j=1}^n e_j \alpha_j)=1$. 
This is a contradiction with $f(1)=1$. Hence $A$ is nonsingular.
By the definition of $\lambda$, we get that 
\[
\lambda(a)^\ast 
\left[ \begin{array}{ccc} 
e_1^\ast e_1 & & e_1^\ast e_n\\ 
               & & \\
e_n^\ast e_1 & & e_n^\ast e_n
\end{array} \right] 
=
\left[ \begin{array}{c} e_1^\ast \\ \\ e_n^\ast \end{array} \right]
a^\ast 
\left[ \begin{array}{ccc} e_1 & & e_n \end{array} \right] 
=
\left[ \begin{array}{ccc} 
e_1^\ast e_1 & & e_1^\ast e_n\\ 
               & & \\
e_n^\ast e_1 & & e_n^\ast e_n
\end{array} \right]  
\lambda(a^\ast)
\]
for every $a \in D$. Applying $f$ to all elements of this identity we get
that $\lambda(a^\ast) = A^{-1} \lambda(a)^\ast A = \lambda(a)^\#$
for every $a \in D$.
\end{proof}

\begin{ex}
Let $D = \left( \frac{a, b}{F} \right)$ be a quaternion algebra. Recall that $D$
is an $F$-algebra with two generators $i$ and $j$, and three relations $i^2=a,j^2=b,ij=-ji$.
It is a division algebra if and only if the only solution in $F$ 
of the equation $ax^2+by^2=z^2$ is $x=y=z=0$.
If $z = \alpha+\beta i+\gamma j+\delta k$ then $f(z)=\alpha+\beta i$.
(Take $x=i,\chi(t)=t^2-a,y_0=-ai,y_1=-a,n=2$ in the definition of $f$.)
Let $\ast$ be an involution of $D$ defined by $i^\ast=i$ and $j^\ast=j$.
Write $k = ij$. Then $k^\ast=-k$ and $1,i,j,k$ is an $F$-basis of $D$.
The subfield $K=F(i)$ is maximal and $\ast$-invariant.
Note that $1,j$ is a right $K$-basis of $D$ and 
\[
A = \left[ \begin{array}{cc}f(1^\ast  1) & f(1^\ast  j) \\ f(j^\ast  1) & f(j^\ast  j)\end{array} \right]
= \left[ \begin{array}{cc} 1 & 0 \\ 0 & b \end{array} \right] 
= \left[ \begin{array}{cc} a_1 & 0 \\ 0 & a_2 \end{array} \right].
\]
With respect to this basis, the imbedding $\lambda \colon D \to M_2(K)$ is given by
\[
\lambda(z)=\left[ \begin{array}{cc} \alpha+\beta i & b(\gamma+\delta i) \\ 
\gamma - \delta i& \alpha-\beta i \end{array} \right].
\]
The involution $\#$ on $M_2(K)$ which extends the involution $\ast$ of $D$ is given by 
$X^\# = A^{-1} X^\ast A$ where $X^\ast$ is the hermitian transpose of $X$:
\[
\left[ \begin{array}{cc}
x & y \\ u & v 
\end{array} \right]^\# =
\left[ \begin{array}{cc}
x^\ast & b u^\ast \\ b^{-1} y^\ast & v^\ast 
\end{array} \right].
\]
\end{ex}

More general examples (crossed products) will be given later.

\section{Formally real involutions on crossed products}
\label{crossed}

Let $D$ be a central division $F$-algebra with involution $\ast$
and $K$ a maximal subfield of $D$ such that $K^\ast \subseteq K$.
In this section we assume that $K/F$ is a Galois extension and we will
write $G$ for its Galois group. In Chapter 4 of \cite{hers} it is shown that 
there exists a normalized cocycle $\Phi \colon G \times G \to K \setminus \{0\}$
such that $D$ is isomorphic to the crossed product algebra $(K/F,\Phi)$.
By definition, $(K/F,\Phi)$ is a right $K$-vector space with basis
$(e_\sigma)_{\sigma \in G}$ and its multiplication is defined by
\[
\mu \left( \sum_{\sigma \in G} e_\sigma c_\sigma, \sum_{\tau \in G} e_\tau d_\tau \right)
= \sum_{\sigma, \tau \in G} e_{\sigma \tau} \Phi(\sigma,\tau) c_\sigma^\tau d_\tau.
\]
The mapping $f \colon D \to K$ from Section \ref{secinvo} satisfies 
\[
f(\sum_{\sigma \in G} e_\sigma c_\sigma)=c_{\id}.
\]
Namely, for every $\sigma \ne \id$, there exists $k \in K$ such that $k^\sigma \ne k$.
Since $k e_\sigma = e_\sigma k^\sigma$ and $f$ is $K$-$K$ bilinear, it follows that
$f(e_\sigma) k =k^\sigma f(e_\sigma)$. By the choice of $k$, $f(e_\sigma)=0$.
On the other hand $f(e_{\id}) = f(1) =1$.

Our first example (motivated by \cite{ms}) shows that formal reality of $\ast$ does not necessarily imply
formal reality of its extension to $D \otimes_F K$. 

\begin{ex} 
\label{mainex}
Let $F=\CC(a,b)$ be the field of all complex rational functions in two variables
and let $\epsilon = \frac{-1+i\sqrt{3}}{2}$. Let $D_3$ be the symbol algebra
$A_\epsilon(a,b;F)$, i.e. $D_3$ is an $F$-algebra with two generators $x$ and $y$ 
which satisfy the following relations $x^3=a$, $y^3=b$, $yx=\epsilon xy$. 
Let $\ast$ be the involution on $D_3$ which fixes $a,b,x,y$ and conjugates
the elements from $\CC$. Note that $K = F(x)$ is a maximal $\ast$-invariant 
subfield of $D_3$. We claim that $D_3$ is formally real but $D_3 \otimes K$ is not.

By eliminating $a$ and $b$ using relations $x^3=a$, $y^3=b$, we see that 
$D_3$ is the skew field of fractions of the Ore domain
$R = \CC \langle x, y \rangle/ (yx-\epsilon xy)$.
Each element from $R$ can be written uniquely as a linear combination of monomials
$x^m y^n$ with complex coefficients. We pick any monomial ordering $<$ and write $\lt(d)$
for the leading term of $d$ with respect to this monomial ordering. If $\lt(d)=cx^my^n$,
then $\lt(d d^\ast) = c \bar{c} \epsilon^{2mn} x^{2m} y^{2n}$. Since $\CC$ is
formally real, it follows that $R$ is formally real as well. Hence, $D_3$ is also formally real
by Proposition 2 in \cite{ci2}.

The involution $\#$ on $D \otimes_F K \cong M_3(K)$ which extends $\ast$ is given by
$X^\# = A^{-1} X^\ast A$ where  $A = [f(y^i y^j)]_{i,j=0,1,2}$; see Proposition \ref{extend}.
By the discussion above, $f(1)=1$, $f(y)=0$, $f(y^2)=0$, $f(y^3)=b$ and $f(y^4)=0$.
Therefore, $A$ is congruent to the diagonal matrix $\diag(1,b,-b)$. 
Since there is no unital hermitian cone on $S_1(K)$ which contains $1$,$b$ and $-b$, 
$(M_3(K),\#)$ is not formally real by Corollary \ref{firstcorr}.
\end{ex}

The goal of this section is to characterize involutions on $D$
for which the extended involutions on $D \otimes_F K$ are formally real. 
We need an auxiliary result:

\begin{prop}
\label{exti}
If $D=(K/F,\Phi)$ is a division algebra with involution $\ast$
satisfying $K^\ast \subseteq K$, then the following assertions are equivalent:
\begin{enumerate}
\item $(k^\ast)^\sigma=(k^\sigma)^\ast$ for every $k \in K$ and $\sigma \in G$,
\item $e_\sigma^\ast e_\sigma \in K$ for every $\sigma \in G$,
\item $f(e_\tau^\ast e_\sigma)=0$ for every $\sigma, \tau \in G$ such that $\sigma \ne \tau$.
\end{enumerate}
\end{prop}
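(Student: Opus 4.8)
The plan is to show that each of (1), (2), (3) is equivalent to one and the same condition on the Galois group: that for every $\sigma\in G$ the map $\psi_\sigma\colon K\to K$ defined by $\psi_\sigma(k)=((k^\ast)^\sigma)^\ast$ coincides with $\sigma$. Note first that $\ast$ preserves the center $F$ of $D$, so (since $K^\ast\subseteq K$) $\psi_\sigma$ is a field automorphism of $K$, and it fixes $F$ pointwise because $\sigma$ does and $\ast$ is an involution on $F$; hence $\psi_\sigma\in G$. Moreover condition (1) is, by definition, exactly the assertion that $\psi_\sigma=\sigma$ for all $\sigma\in G$.

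The key structural step is to pin down the form of $e_\sigma^\ast$. Applying $\ast$ to the relation $k e_\sigma=e_\sigma k^\sigma$ gives $e_\sigma^\ast k^\ast=(k^\sigma)^\ast e_\sigma^\ast$, and the substitution $l=k^\ast$ turns this into $e_\sigma^\ast l=\psi_\sigma(l)\,e_\sigma^\ast$ for all $l\in K$. I would then take an arbitrary $h\in D$ satisfying $hl=\psi_\sigma(l)h$ for all $l\in K$, write $h=\sum_{\rho\in G}e_\rho h_\rho$, and use $m e_\rho=e_\rho m^\rho$ to compare coefficients: this forces $l h_\rho=(\rho\psi_\sigma)(l)h_\rho$ for every $\rho$ and every $l$, so $h_\rho=0$ unless $\rho\psi_\sigma=\id$, i.e. unless $\rho=\psi_\sigma^{-1}$. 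Applied to $h=e_\sigma^\ast$ this yields $e_\sigma^\ast=e_{\psi_\sigma^{-1}}\,b_\sigma$ for a single $b_\sigma\in K$, and $b_\sigma\neq 0$ since $e_\sigma^\ast$ is a unit (being the $\ast$-image of the unit $e_\sigma$).

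Everything else is a direct computation with the crossed-product multiplication and the formula $f(\sum_\rho e_\rho c_\rho)=c_{\id}$. Using the shape of $e_\tau^\ast$ I would compute
\[
e_\tau^\ast e_\sigma = e_{\psi_\tau^{-1}}\,b_\tau\,e_\sigma = e_{\psi_\tau^{-1}\sigma}\,\Phi(\psi_\tau^{-1},\sigma)\,b_\tau^\sigma ,
\]
which is a nonzero element supported on the single basis vector $e_{\psi_\tau^{-1}\sigma}$, since $\Phi$ takes values in $K\setminus\{0\}$ and $b_\tau\neq 0$. Specializing to $\tau=\sigma$, the element $e_\sigma^\ast e_\sigma$ lies in $K$ if and only if $\psi_\sigma^{-1}\sigma=\id$, i.e. $\psi_\sigma=\sigma$; hence (2) (for all $\sigma$) is equivalent to (1). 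For (3), $f(e_\tau^\ast e_\sigma)\neq 0$ holds exactly when $\psi_\tau^{-1}\sigma=\id$, i.e. $\sigma=\psi_\tau$; thus the requirement in (3) that $f(e_\tau^\ast e_\sigma)=0$ whenever $\sigma\neq\tau$ is violated precisely when some $\tau$ has $\psi_\tau\neq\tau$ (take $\sigma=\psi_\tau$). So (3) is equivalent to (1) as well, and all three statements are equivalent.

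I do not expect a genuine obstacle. The one step needing care is the structural claim that $e_\sigma^\ast$ is again a $K$-multiple of a single basis vector $e_\rho$: this rests on correctly identifying the twisting automorphism $\psi_\sigma$ and on checking that it belongs to $G$, which is exactly where the $\ast$-invariance of $F$ enters. After that, the equivalences are pure bookkeeping with elements of $G$.
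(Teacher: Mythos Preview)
Your argument is correct and complete. The introduction of the conjugated automorphism $\psi_\sigma=\ast\circ\sigma\circ\ast$ is the right invariant, and the structural claim that $e_\sigma^\ast$ is a nonzero $K$-multiple of the single basis vector $e_{\psi_\sigma^{-1}}$ is sound: the coefficient comparison you describe works because the $e_\rho$ form a free right $K$-basis and because $\psi_\sigma$ really does land in $G$ (your justification via $F^\ast\subseteq F$ is exactly what is needed). From there the equivalences are indeed bookkeeping.

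Your route is genuinely different from the paper's. The paper never computes $e_\sigma^\ast$ explicitly; instead it derives the single identity
\[
(k^\tau)^\ast\, e_\tau^\ast e_\sigma \;=\; e_\tau^\ast e_\sigma\, (k^\ast)^\sigma
\]
and reads off each implication from it: for $(1)\Rightarrow(2)$ it invokes the Double Centralizer Theorem (the identity with $\tau=\sigma$ shows $e_\sigma^\ast e_\sigma$ centralizes $K$), and for $(3)\Rightarrow(1)$ it uses the nonsingularity of the Gram matrix $A=[f(e_\tau^\ast e_\sigma)]$ established earlier to guarantee $f(e_\sigma^\ast e_\sigma)\neq 0$. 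Your approach trades these two external inputs for the explicit decomposition $e_\sigma^\ast=e_{\psi_\sigma^{-1}}b_\sigma$, which makes the argument entirely self-contained and in fact yields a little more information (a concrete formula for $e_\tau^\ast e_\sigma$). The paper's version is shorter and leans on machinery already in place; yours is more hands-on and avoids appealing to the Double Centralizer Theorem or to the earlier proposition about $A$.
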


\begin{proof} For every $k \in K$ and every $\sigma, \tau \in G$, we have that
\[
(k^\tau)^\ast e_\tau^\ast e_\sigma = 
(e_\tau k^\tau)^\ast e_\sigma =
(k e_\tau)^\ast e_\sigma = 
e_\tau^\ast k^\ast e_\sigma=
e_\tau^\ast e_\sigma (k^\ast)^\sigma.
\]
We will use this identity several times.

$(1) \Rightarrow (2)$ Suppose that $(k^\sigma)^\ast= (k^\ast)^\sigma$
for every $k \in K$ and $\sigma \in G$. Then the identity (used with $\tau=\sigma$) implies 
that $e_\sigma^\ast e_\sigma$ commutes with $(k^\ast)^\sigma$ for every $k \in K$, 
hence it commutes with every element from $K$. By the Double Centralizer Theorem, 
it follows that $e_\sigma^\ast e_\sigma \in K$ for every $\sigma \in G$.

$(2) \Rightarrow (1)$ If $e_\sigma^\ast e_\sigma \in K$ for every $\sigma$, 
then the identity implies that $(k^\ast)^\sigma=(k^\sigma)^\ast$ 
for every $k \in K$ and $\sigma \in G$.

$(1) \Rightarrow (3)$ Pick any $\sigma, \tau \in G$ such that $\sigma \ne \tau$.
Then there exists $k \in K$ such that $(k^\tau)^\ast \ne (k^\sigma)^\ast$.
The identity and the assumption imply that
$(k^\tau)^\ast f(e_\tau^\ast e_\sigma) = 
f(e_\tau^\ast e_\sigma) (k^\ast)^\sigma =
f(e_\tau^\ast e_\sigma) (k^\sigma)^\ast$,
hence $f(e_\tau^\ast e_\sigma) = 0$. 

$(3) \Rightarrow (1)$ Suppose that $f(e_\tau^\ast e_\sigma)=0$ 
for every $\sigma, \tau \in G$ such that $\sigma \ne \tau$. The fact that
the matrix $A = [f(e_\tau^\ast e_\sigma)]$ is nonsingular implies that
$f(e_\sigma^\ast e_\sigma) \ne 0$ for every $\sigma \in G$. 
Replacing $\tau$ by $\sigma$ in the identity and applying $f$, we get that 
$(k^\sigma)^\ast f(e_\sigma^\ast e_\sigma)
= f(e_\sigma^\ast e_\sigma) (k^\ast)^\sigma$. 
Since $f(e_\sigma^\ast e_\sigma) \ne 0$, it follows that 
$(k^\sigma)^\ast= (k^\ast)^\sigma$
for every $k \in K$ and $\sigma \in G$. 
\end{proof}

\begin{thm}
\label{crossinvo}
If $D=(K/F,\Phi)$ is a division algebra with involution $\ast$
satisfying $K^\ast \subseteq K$, then the following assertions are equivalent:
\begin{enumerate}
\item $D \otimes K$ is formally real,
\item $D$ is formally real and $(k^\ast)^\sigma=(k^\sigma)^\ast$ for every $k \in K$ and $\sigma \in G$,
\item $e_\sigma^\ast e_\sigma \in K$ for every $\sigma \in G$ and there exists a 
unital hermitian cone on $K$ which contains all of them.
\end{enumerate}
\end{thm}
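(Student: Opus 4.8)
The plan is to move everything to the split algebra. First I would record, from the discussion preceding Proposition \ref{extend}, that $D\otimes_F K\cong M_n(K)$ with $n=|G|$ via the left regular representation $\lambda$, and that by Proposition \ref{extend} the extended involution is $X^\#=A^{-1}X^\ast A$ with $A=[f(e_\tau^\ast e_\sigma)]_{\tau,\sigma\in G}$ a nonsingular \emph{hermitian} matrix over $(K,\ast|_K)$. Since $A^\ast=A$ we are in the case $\eta=1\ne-1$ of Corollary \ref{firstcorr}, so assertion $(1)$ is equivalent to the existence of a $1$-hermitian cone on $(K,\ast|_K)$ containing all entries of some diagonalization of $A$; as $A_{\id,\id}=f(e_\id^\ast e_\id)=f(1)=1$ always occurs on the diagonal, such a cone is automatically unital. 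Everything now turns on the shape of $A$.

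The crucial step is to show that $A$ is a monomial (generalized permutation) matrix. Applying the $K$-$K$-bilinear map $f$ to the identity $(k^\tau)^\ast e_\tau^\ast e_\sigma=e_\tau^\ast e_\sigma(k^\ast)^\sigma$ from the proof of Proposition \ref{exti} yields $(k^\tau)^\ast A_{\tau\sigma}=A_{\tau\sigma}(k^\ast)^\sigma$ for every $k\in K$; hence $A_{\tau\sigma}\ne0$ forces $(k^\tau)^\ast=(k^\ast)^\sigma$ for all $k$, i.e. $\sigma=\pi(\tau)$, where $\pi(\tau):=\ast|_K\circ\tau\circ\ast|_K$ lies in $G$ (because $F^\ast\subseteq F$) and satisfies $\pi^2=\id$. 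Thus each row of $A$ has at most one nonzero entry, and since $A$ is invertible each row --- and, $\pi$ being a bijection, each column --- has exactly one, sitting at $(\tau,\pi(\tau))$ with value $a_\tau\in K^\times$; hermiticity of $A$ gives $a_{\pi(\tau)}^\ast=a_\tau$. Conjugating by a permutation matrix to line up the $\pi$-orbits and then diagonalizing the resulting $2\times2$ blocks by the second assertion of Proposition \ref{diag} (valid since $\cha K=0$; a short computation shows each such block is congruent to $\diag(2,-2)$), I get that $A$ is congruent over $K$ to $\bigoplus_{\pi(\tau)=\tau}[e_\tau^\ast e_\tau]\oplus\bigoplus_{2\text{-orbits of }\pi}\diag(2,-2)$, noting that for a fixed point $\tau$ the same identity forces $e_\tau^\ast e_\tau\in K$.

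With this I would split into two cases. If $\pi\ne\id$, a $2$-orbit occurs, so a diagonalization of $A$ contains both $2$ and $-2$; no $1$-hermitian cone contains both (as $2\ne0$), so $(1)$ fails by Corollary \ref{firstcorr}. But $\pi\ne\id$ also means $(k^\ast)^\sigma\ne(k^\sigma)^\ast$ for some $k,\sigma$, so by Proposition \ref{exti} both $(2)$ and $(3)$ fail as well, and the equivalence holds vacuously. If $\pi=\id$, then $(k^\ast)^\sigma=(k^\sigma)^\ast$ for all $k,\sigma$, Proposition \ref{exti} gives $e_\sigma^\ast e_\sigma\in K$ for all $\sigma$, and $A=\diag(e_\sigma^\ast e_\sigma)_{\sigma\in G}$, so Corollary \ref{firstcorr} says $(1)$ is equivalent to the existence of a unital hermitian cone on $K$ containing all the $e_\sigma^\ast e_\sigma$ --- exactly $(3)$. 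For $(1)\Leftrightarrow(2)$, where $(2)$ now reduces to ``$D$ is formally real'': if $(1)$ holds then $\lambda$ embeds $(D,\ast)$ as a $\ast$-subalgebra of $(M_n(K),\#)$ (it is injective since $D$ is simple, and $\lambda(a^\ast)=\lambda(a)^\#$ by Proposition \ref{extend}), so $D$ is formally real; conversely, if $D$ is formally real then the set $M$ of finite sums of elements $rr^\ast$, $r\in D$, is a unital hermitian cone on $D$, and $N:=M\cap\sym(K)$ is a unital hermitian cone on $K$ containing every $e_\sigma^\ast e_\sigma$, giving $(3)$ and hence $(1)$.

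The main obstacle is the monomiality of $A$: that one structural fact is what makes the forward directions of $(2)$ and $(3)$ --- in particular the non-obvious implication that formal reality of $D\otimes K$ forces every $e_\sigma^\ast e_\sigma$ into $K$ --- fall out immediately. After that, the remaining steps are routine bookkeeping with Propositions \ref{diag} and \ref{exti} and Corollary \ref{firstcorr}.
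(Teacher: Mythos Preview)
Your proof is correct and uses the same toolkit as the paper --- the identity $(k^\tau)^\ast e_\tau^\ast e_\sigma = e_\tau^\ast e_\sigma (k^\ast)^\sigma$ from Proposition~\ref{exti}, Corollary~\ref{firstcorr}, and (for $(2)\Rightarrow(3)$) the cone of sums of hermitian squares --- but the organization differs in an instructive way. The paper argues the cycle $(1)\Rightarrow(2)\Rightarrow(3)\Rightarrow(1)$; for $(1)\Rightarrow(2)$ it shows, by contrapositive, that a zero diagonal entry of $A$ forces a diagonalization with two nonzero entries of opposite sign, so no cone can contain $A$. You instead extract, unconditionally, the stronger structural fact that $A$ is a \emph{monomial} matrix supported on the graph of the involution $\pi(\tau)=\ast|_K\circ\tau\circ\ast|_K$ of $G$, and then split into cases: if $\pi=\id$ then $A$ is already diagonal with entries $e_\sigma^\ast e_\sigma\in K$ and all three equivalences become transparent; if $\pi\ne\id$ a $2$-orbit produces a block congruent to $\diag(2,-2)$ and all three conditions visibly fail together. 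Your route makes the shape of $A$ fully explicit and yields a cleaner dichotomy, while the paper's route is slightly shorter since it never needs to name $\pi$; the underlying computation on the off-diagonal $2\times2$ block is the same in both.
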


\begin{proof}
$(1) \Rightarrow (2)$ Suppose that $D \otimes K$ is formally real. 
We claim that $f(e_\sigma^\ast e_\sigma) \ne 0$ for every $\sigma$.
The identity from the proof of Proposition \ref{exti} (used with $\tau=\sigma$)
then implies that $(k^\sigma)^\ast= (k^\ast)^\sigma$
for every $k \in K$ and $\sigma \in G$. 
If the claim is false then the matrix $A$ has a zero entry on the diagonal,
hence it is congruent to a diagonal matrix which has two nonzero entries 
of opposite signs. Hence $A$ cannot belong to a unital hermitian cone on
$(M_n(K),\ast)$. Therefore $(D,\ast) \cong (M_n(K),\#)$ does not
have a unital hermitian cone, contrary to the assumption.
The second part of assertion $(2)$
follows from the fact that $D$ is contained in $D \otimes K$. 

$(2) \Rightarrow (3)$ If $(2)$ is true, then by Proposition \ref{exti}
$e_\sigma^\ast e_\sigma \in K$ for every $\sigma \in G$.
Since $D$ is formally real, the set of all sums of hermitian squares
is a unital hermitian cone which contains $e_\sigma^\ast e_\sigma$ 
and it restricts to a unital hermitian cone on $K$.

$(3) \Rightarrow (1)$ If $(3)$ is true, then by the proposition
$A = [f(e_\tau^\ast e_\sigma)]$ is a diagonal matrix with 
entries $a_\sigma = e_\sigma^\ast e_\sigma$. By the assumption
there exists a unital hermitian cone on $K$ containing all $a_\sigma$.
Hence $D \otimes K$ is formally real by Corollary \ref{firstcorr}.
\end{proof}

For the sake of completeness we also note that $(D,\ast)$ need not be formally real
even if all its maximal $\ast$-subfields are formally real.

\begin{ex} 
\label{mainexvar}
We consider a variant of Example \ref{mainex}. Let $\epsilon = \frac{-1+i\sqrt{3}}{2}$
and $a=b=2$. Let $D$ be a $\QQ(\eps)$-algebra generated with two generators $x, y$ 
and three relations $x^3=a$, $y^3=b$, $yx=\epsilon xy$. 
The involution is defined by $\eps^\ast=\eps^{-1},x^\ast=x,y^\ast=y$. 

We claim that every maximal $\ast$-subfield of $D$ is formally real. Note that 
$\ast|_{\QQ(\eps)} \ne \id$, hence we know by the proof of Lemma \ref{anti}
that every maximal $\ast$-subfield of $D$ can be generated by a symmetric element.
It follows that every maximal $\ast$-subfield can be $\ast$-embedded into
$\CC$ with standard involution, thus it is formally real.

We also claim that $D$ is not formally real. It suffices to see that
\[
d_1^\ast d_1+d_2^\ast d_2+d_3^\ast d_3+d_4^\ast d_4=0,
\]
where
\[
\begin{array}{lll}
d_1 = \eps^{-1} x+x^2+ 2 y, & & d_3 = 2x-x^2+x y^2, \\
d_2 = 1- \eps^{-1} x y - x^2 y^2, & & d_4 = 3-x-x^2.
\end{array}
\]
\end{ex}

\section{Extensions and contractions of unital hermitian cones}

Let $D=(K/F,\Phi)$ be a crossed product division algebra with involution $\ast$
such that $K^\ast \subseteq K$ and $(D \otimes K,\ast) \cong (M_n(K),\#)$ is formally real. 
Let $(e_\phi)_{\phi \in G}$ be the standard right $K$-basis of $D$ and let 
$\lambda \colon D \to M_n(K)$ be the left regular representation of $D$ with respect to 
the standard basis. Let $\mathcal{N}$ be the set of all unital hermitian cones on $(K,\ast)$ 
which contain $a_\sigma = e_\sigma^\ast e_\sigma$ for every $\sigma \in G$. 
By assertion $(3)$ of Theorem \ref{crossinvo}, $\mathcal{N}$ is nonempty.

\begin{lem}
We can define an action of $G$ on $\mathcal{N}$ by 
\[
N_\sigma = \left(\frac{1}{a_\sigma} N \right)^{\sigma^{-1}}
= \{\left(\frac{n}{a_\sigma}\right)^{\sigma^{-1}} \! \vert \ n \in N\}.
\]
\end{lem}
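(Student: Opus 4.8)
The plan is to verify directly that the formula $N_\sigma = \left(\frac{1}{a_\sigma}N\right)^{\sigma^{-1}}$ produces a set that is again an element of $\mathcal{N}$, and then to check the two group-action axioms. First I would unwind the definition: an element of $N_\sigma$ has the form $(n/a_\sigma)^{\sigma^{-1}}$ for $n \in N$. Since $N \subseteq K$, $a_\sigma \in K$, and $\sigma^{-1}$ is a field automorphism of $K$, we certainly get a subset of $K$. The first substantive claim is that $N_\sigma$ is a unital hermitian cone on $(K,\ast)$. Additivity is immediate because $m \mapsto (m/a_\sigma)^{\sigma^{-1}}$ is additive. For closure under $k(\cdot)k^\ast$ with $k \in K$: given $(n/a_\sigma)^{\sigma^{-1}} \in N_\sigma$, I must show $k(n/a_\sigma)^{\sigma^{-1}}k^\ast$ lies in $N_\sigma$; writing $k = (k')^{\sigma^{-1}}$ for a unique $k' \in K$, this element equals $((k'n(k'')^\ast)/a_\sigma)^{\sigma^{-1}}$ provided $(k^\ast)$ matches $((k')^\ast)^{\sigma^{-1}}$, i.e. provided $(k^\ast)^{\sigma} = (k^\sigma)^\ast$ — which is exactly the compatibility hypothesis guaranteed by Theorem \ref{crossinvo}(2) (equivalently Proposition \ref{exti}(1)), and this is the one place where that hypothesis is genuinely needed. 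Here I also use that $a_\sigma$ is $\ast$-symmetric, which follows since $a_\sigma = e_\sigma^\ast e_\sigma$. Then $k'n(k')^\ast \in N$ because $N$ is a hermitian cone, so the result lies in $N_\sigma$.

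Next, $N_\sigma \cap -N_\sigma = \{0\}$: if $(n_1/a_\sigma)^{\sigma^{-1}} = -(n_2/a_\sigma)^{\sigma^{-1}}$ with $n_1, n_2 \in N$, then applying $\sigma$ and multiplying by $a_\sigma$ gives $n_1 = -n_2$, so $n_1 \in N \cap -N = \{0\}$. For the unital condition, $1 \in N_\sigma$ because $1 = (a_\sigma/a_\sigma)^{\sigma^{-1}}$ and $a_\sigma \in N$ (by definition of $\mathcal{N}$, since $a_\sigma = e_\sigma^\ast e_\sigma$ is one of the designated elements $N$ must contain). Finally, to see $N_\sigma \in \mathcal{N}$ I must check $a_\tau \in N_\sigma$ for every $\tau \in G$; this amounts to showing $(a_\tau)^\sigma a_\sigma \in N$, or equivalently, after the substitution, that $a_\sigma\cdot (a_\tau)^{\sigma} \in N$, which I expect to follow from a cocycle identity relating $e_{\sigma\tau}^\ast e_{\sigma\tau}$ to $a_\sigma$, $(a_\tau)^\sigma$ and the value of $\Phi$ — using that $N$ contains $a_{\sigma\tau}$ and is closed under $k(\cdot)k^\ast$ to absorb the $\Phi$ factor and its conjugates. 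This compatibility of the action with the constraint set $\mathcal{N}$ is the step I expect to be the main obstacle, since it is the only place the crossed-product structure (the cocycle $\Phi$) enters rather than just the field-with-involution structure.

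For the group-action axioms, I would compute $N_{\id}$: since $e_{\id} = 1$ we have $a_{\id} = 1$ and $\id^{-1} = \id$, so $N_{\id} = N$, giving the identity axiom. For associativity, $(N_\sigma)_\tau = N_{\tau\sigma}$: unwinding, $(N_\sigma)_\tau = \left(\frac{1}{a_\tau}\left(\frac{1}{a_\sigma}N\right)^{\sigma^{-1}}\right)^{\tau^{-1}} = \left(\frac{1}{(a_\tau)^{\sigma}a_{\tau\sigma}^{-1} \cdot a_{\tau\sigma}} \cdot \frac{N^{\sigma^{-1}\tau^{-1}}\cdot(\text{correction})}{\,}\right)$ — more carefully, I pull the scalar $1/a_\tau$ through the automorphism $\sigma^{-1}$, obtaining $(1/(a_\tau)^\sigma)$ inside, and combine $(a_\tau)^\sigma a_\sigma$ with $a_{\tau\sigma}$ via the cocycle relation for $e_{\tau\sigma} = e_\tau e_\sigma \cdot \Phi(\tau,\sigma)^{-1}$-type identity; the ratio is a norm-like factor $c \in K$ with $cN = N$, so the two sets agree. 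The bookkeeping here is routine once the cocycle identity $a_{\tau\sigma} = \Phi(\tau,\sigma)^\ast\,(a_\tau)^\sigma\,a_\sigma\,\Phi(\tau,\sigma)$ (or its correct form) is in hand, so I would state and verify that identity first as a short lemma-within-the-proof, then feed it into both the well-definedness check and the associativity check.
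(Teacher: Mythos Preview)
Your proposal is correct and follows essentially the same route as the paper: check the cone axioms for $N_\sigma$ using the commutation of $\ast|_K$ with $G$ (Theorem~\ref{crossinvo}(2)), then obtain both $a_\tau \in N_\sigma$ and the composition rule $(N_\tau)_\sigma = N_{\tau\sigma}$ from the single identity $a_\sigma a_\tau^\sigma = e_\sigma^\ast e_\tau^\ast e_\tau e_\sigma = \Phi(\tau,\sigma)^\ast a_{\tau\sigma} \Phi(\tau,\sigma)$. That is the correct form of the cocycle identity you were guessing at (the $\Phi$-factors sit on the $a_{\tau\sigma}$ side, not the other), and the paper derives it in one stroke from $e_\tau e_\sigma = e_{\tau\sigma}\Phi(\tau,\sigma)$.
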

\begin{proof} An element $k \in K$ belongs to $N_\sigma$ if and only if 
$a_\sigma k^\sigma \in N$. It follows that $N_\sigma+N_\sigma \subseteq N_\sigma$ 
and $N_\sigma \cap -N_\sigma=\{0\}$. By assertion $(2)$ of Theorem \ref{crossinvo},
$\ast|_K$ commutes with every element of $G$. It follows that $N_\sigma \subseteq S_1(K)$
and $r^\ast N_\sigma r \subseteq N_\sigma$ for every $r \in K$. For every $\sigma, \tau \in G$
we have $a_\sigma a_\tau^\sigma = e_\sigma^\ast e_\tau^\ast e_\tau e_\sigma =
\Phi(\tau,\sigma)^\ast a_{\tau \sigma} \Phi(\tau,\sigma) \in N$. It follows that $N_\sigma$
contains $a_\tau$ for every $\tau \in G$ and that $(N_\tau)_\sigma=N_{\tau \sigma}$.
\end{proof}

Let $\mathcal{M}$ be the set of all unital hermitian cones on $(D,\ast)$. For every 
$M \in \mathcal{M}$ write $M^c = M \cap K$ and note that $M^c \in \mathcal{N}$.
For every $N \in \mathcal{N}$ write $N^e = \lambda^{-1}\phi^{-1}F(N)=
\{c \in D \vert \ x^\ast A \lambda(c) x \in N \mbox{ for every } x \in K^n \}$
where $A =\diag(a_\sigma)_{\sigma \in G}$. By Corollary \ref{firstcorr}, we
see that $\phi^{-1}F(N)$ is a unital hermitian cone on $(M_n(K),\#)$,
hence $N^e \in \mathcal{M}$.

\begin{thm}
\label{mainext}
Setup from above. For every $N \in \mathcal{N}$ we have 
\begin{enumerate}
\item $N^e =\{u \in D \vert \ f(d^\ast u d) \in N \mbox{ for every } d \in D\}$,
\item $N^{ec} = \bigcap_{\sigma \in G} N_\sigma$,
\item $(N_\sigma)^e = N^e$ for every $\sigma \in G$,
\item $N^{ece} = N^e$. 
\end{enumerate} 
For every $M \in \mathcal{M}$ we have 
\begin{enumerate}
\setcounter{enumi}{4}
\item $(M^c)_\sigma = M^c$ for every $\sigma \in G$,
\item $M^{cec}=M^c$.
\end{enumerate}
\end{thm}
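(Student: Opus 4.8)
The plan is to prove the six assertions roughly in the order listed, since each builds on the previous ones. For (1), I would start from the definition $N^e=\lambda^{-1}\phi^{-1}F(N)$, unwind the maps: $u\in N^e$ iff $\lambda(u)\in\phi^{-1}F(N)$ iff $A\lambda(u)\in F(N)$ iff $x^\ast A\lambda(u)x\in N$ for all $x\in K^n$. Then I would recall from Proposition~\ref{extend} (with the standard crossed-product basis, so $A=[f(e_\tau^\ast e_\sigma)]=\diag(a_\sigma)$) that $A\lambda(u)=[f(e_i^\ast u e_j)]$, so $x^\ast A\lambda(u)x=f\big((\sum_i e_i\bar x_i)^\ast u(\sum_j e_j x_j)\big)$ after using that $f$ is $K$--$K$-bilinear and $\ast$ acts entrywise. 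Since $D$ is a division algebra, $d\mapsto\sum_j e_j x_j$ ranges over all of $D$ as $x$ ranges over $K^n$, which gives the stated description of $N^e$. For (3), from the description in (1), $u\in(N_\sigma)^e$ iff $f(d^\ast u d)\in N_\sigma$ for all $d$, i.e. $a_\sigma f(d^\ast ud)^\sigma\in N$; writing $d=e_\sigma d'$ and using $f(e_\sigma^\ast x e_\sigma)=f(x)^?$-type identities together with $a_\sigma a_\tau^\sigma=\Phi(\tau,\sigma)^\ast a_{\tau\sigma}\Phi(\tau,\sigma)$ from the Lemma, one should be able to rewrite $a_\sigma f(d^\ast u d)^\sigma$ as $f((e_\sigma d')^\ast u(e_\sigma d'))$; as $d'$ ranges over $D$ so does $e_\sigma d'$, so the condition is equivalent to $u\in N^e$.

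For (2), I would compute $N^{ec}=N^e\cap K$. An element $k\in K$ lies in $N^e$ iff $f(d^\ast k d)\in N$ for all $d\in D$; taking $d=e_\sigma c$ with $c\in K$ gives $f(c^\ast e_\sigma^\ast k e_\sigma c)=c^\ast f(e_\sigma^\ast k e_\sigma)c=c^\ast(a_\sigma k^\sigma)c\in N$, i.e. $a_\sigma k^\sigma\in N$ (take $c=1$, and note the condition for general $c$ follows since $N$ is a hermitian cone on $K$), which is exactly $k\in N_\sigma$. Running over all $\sigma\in G$ and noting that a general $d=\sum_\sigma e_\sigma c_\sigma$ expands $f(d^\ast k d)$ into a sum whose membership in $N$ is forced once all the $a_\sigma k^\sigma$ lie in $N$ (here one uses bilinearity of $f$, the cocycle identity, and that $N$ absorbs $r^\ast(\,\cdot\,)r$), gives $N^{ec}=\bigcap_{\sigma\in G}N_\sigma$. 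Assertion (4) is then formal: apply (2) to get $N^{ec}=\bigcap_\sigma N_\sigma$, then (3) (and the fact that $(N_\sigma)_\tau=N_{\sigma\tau}$, so the intersection is $G$-stable, together with $(N_1)^e=N^e$) to conclude $N^{ece}=\big(\bigcap_\sigma N_\sigma\big)^e=N^e$; more carefully, $\big(\bigcap_\sigma N_\sigma\big)^e\subseteq (N_1)^e=N^e$, and for the reverse inclusion observe $u\in N^e$ forces $f(d^\ast ud)\in N^{ec}=\bigcap N_\sigma$ for all $d$ (by definition of the contraction and (2)), which is exactly $u\in(N^{ec})^e$.

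For (5), given $M\in\mathcal M$, the Lemma tells us $(M^c)_\sigma=\{k\in K: a_\sigma k^\sigma\in M^c\}$. Since $a_\sigma=e_\sigma^\ast e_\sigma$ and $M$ absorbs $r^\ast(\,\cdot\,)r$ for $r\in D$, membership $a_\sigma k^\sigma=e_\sigma^\ast k^\sigma e_\sigma\in M$ follows from $k^\sigma\in M$; conversely $k^\sigma=(e_\sigma^{-1})^\ast(a_\sigma k^\sigma)(e_\sigma^{-1})\in M$ follows from $a_\sigma k^\sigma\in M$ (using invertibility of $e_\sigma$ in the division algebra). So $(M^c)_\sigma=\{k: k^\sigma\in M\cap K\}=(M^c)^{\sigma^{-1}}$, and since $M$ (hence $M^c$) is closed under the $G$-action coming from conjugation by the $e_\sigma$, this equals $M^c$. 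Finally (6): the general extension/contraction formalism gives $M^c\subseteq M^{cec}$ always (apply $e$ then $c$), and $M^{cec}=(M^c)^{ec}=\bigcap_\sigma(M^c)_\sigma=M^c$ by (2) and (5). The step I expect to be the main obstacle is assertion~(3) --- tracking the $\Phi$-twists and the interaction of $f$ with left multiplication by $e_\sigma$ precisely enough to identify $a_\sigma f(d^\ast ud)^\sigma$ with $f((e_\sigma d)^\ast u (e_\sigma d))$; the key identities are $f(e_\sigma x)=$ (something involving $\Phi$) and the relation $a_\sigma a_\tau^\sigma=\Phi(\tau,\sigma)^\ast a_{\tau\sigma}\Phi(\tau,\sigma)$ already recorded in the preceding Lemma, so it should go through, but it is the one genuine computation.
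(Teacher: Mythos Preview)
Your plan follows the paper's proof closely: the paper proves (1) by the same matrix identity $A\lambda(u)=[f(e_\sigma^\ast u e_\tau)]$, proves (2) directly from the diagonal form $A\lambda(k)=\diag(a_\sigma k^\sigma)$ rather than via (1) (your route through (1) also works, since $f(d^\ast k d)=\sum_\sigma c_\sigma^\ast(a_\sigma k^\sigma)c_\sigma$), and derives (4), (5), (6) exactly as you indicate.

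There is one concrete slip, in the key computation for (3). You write $d=e_\sigma d'$ and then claim one can rewrite $a_\sigma f(d^\ast u d)^\sigma$ as $f\big((e_\sigma d')^\ast u(e_\sigma d')\big)$. But with $d=e_\sigma d'$ the right-hand side is literally $f(d^\ast u d)$, so your asserted identity reads $a_\sigma X^\sigma=X$, which is false. More to the point, the \emph{left}-multiplication substitution does not produce the needed relation: already for $d'=k\in K$ one computes $f\big((e_\sigma k)^\ast u(e_\sigma k)\big)=k^\ast a_\sigma r_{\id}^\sigma k$, while $a_\sigma f(k^\ast u k)^\sigma=a_\sigma k^{\ast\sigma} r_{\id}^\sigma k^\sigma$, and these differ when $k\ne k^\sigma$. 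The correct move (and what the paper does) is \emph{right} multiplication: set $d_\sigma:=d\,e_\sigma=\sum_\phi e_{\phi\sigma}\Phi(\phi,\sigma)k_\phi^\sigma$ and verify, using the cocycle identity and $\Phi(\omega\tau,\sigma)^\ast a_{\omega\tau\sigma}\Phi(\omega\tau,\sigma)=a_\sigma a_{\omega\tau}^\sigma$, that
\[
f(d_\sigma^\ast u d_\sigma)=a_\sigma\,f(d^\ast u d)^\sigma.
\]
Since $d\mapsto d\,e_\sigma$ is a bijection of $D$, this gives $u\in N^e\iff u\in(N_\sigma)^e$. You correctly anticipated that (3) is the crux and that the relation $a_\sigma a_\tau^\sigma=\Phi(\tau,\sigma)^\ast a_{\tau\sigma}\Phi(\tau,\sigma)$ is the key input; only the side on which $e_\sigma$ acts needs to be fixed.

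A minor point in (5): your formula $(e_\sigma^{-1})^\ast(a_\sigma k^\sigma)(e_\sigma^{-1})$ equals $k$, not $k^\sigma$ (since $a_\sigma=e_\sigma^\ast e_\sigma$ gives $(e_\sigma^{-1})^\ast a_\sigma=e_\sigma$ and $e_\sigma k^\sigma e_\sigma^{-1}=k$). This actually shortens your argument: $a_\sigma k^\sigma=e_\sigma^\ast k e_\sigma\in M$ iff $k\in M$, so $(M^c)_\sigma=M^c$ directly, with no need for the separate $G$-invariance step.
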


\begin{proof}
To prove $(1)$, pick any $u \in D$ and note that $[e_\sigma^\ast u e_\tau]_{\sigma,\tau \in G}
= [e_\sigma^\ast e_\tau]_{\sigma,\tau \in G} \lambda(u)$. Applying $f$ to all $n^2$ equations,
we get $[f(e_\sigma^\ast u e_\tau)]_{\sigma,\tau \in G}
= [f(e_\sigma^\ast e_\tau)]_{\sigma,\tau \in G} \lambda(u)$ which is equal to $A \lambda(u)$.
For every $x = (k_\phi)_{\phi \in G}$, we get $x^\ast A \lambda(u) x = f(d^\ast u d)$,
where $d = \sum_{\phi \in G} e_\phi k_\phi$. Claim $(1)$ now follows from the definition of $N^e$.

Claim $(3)$ is rather tricky. Pick $\sigma \in G$ and 
$u = \sum_{\omega \in G} e_\omega r_\omega \in D$. 
For every $d=\sum_{\phi \in G} e_\phi k_\phi \in D$ write 
$d_\sigma = \sum_{\phi \in G} e_{\phi \sigma} \Phi(\phi,\sigma) k_\phi^\sigma$. We have
\[
\begin{array}{ccl}
f(d_\sigma^\ast u d_\sigma)  & = & \sum_\omega f(d_\sigma^\ast e_\omega r_\omega d_\sigma) \\
& = & \sum_{\omega,\phi,\tau} f(k_\phi^{\sigma \ast} \Phi(\phi,\sigma)^\ast e_{\phi \sigma}^\ast
e_\omega r_\omega e_{\tau \sigma} \Phi(\tau, \sigma) k_\tau^\sigma) \\
& = & \sum_{\omega,\phi,\tau} f(k_\phi^{\sigma \ast} \Phi(\phi,\sigma)^\ast e_{\phi \sigma}^\ast
e_{\omega \tau \sigma} r_\omega^{\tau \sigma} \Phi(\omega,\tau \sigma) \Phi(\tau,\sigma) k_\tau^\sigma) \\
& \stackrel{(\ast)}{=} & \sum_{\omega,\phi,\tau} k_\phi^{\sigma \ast} \Phi(\phi,\sigma)^\ast a_{\phi \sigma}
\delta_{\phi,\omega \tau} r_\omega^{\tau \sigma} \Phi(\omega\tau, \sigma) \Phi(\omega,\tau)^\sigma
k_\tau^\sigma \\
& = & \sum_{\omega,\tau} k_{\omega \tau}^{\sigma \ast} \Phi(\omega \tau,\sigma)^\ast 
a_{\omega \tau \sigma} 
 r_\omega^{\tau \sigma} \Phi(\omega\tau, \sigma) \Phi(\omega,\tau)^\sigma
k_\tau^\sigma \\ 
&  \stackrel{(\ast\ast)}{=} & \sum_{\omega,\tau} k_{\omega \tau}^{\ast \sigma} 
a_\sigma a_{\omega \tau}^\sigma  r_\omega^{\tau \sigma}  \Phi(\omega,\tau)^\sigma
k_\tau^\sigma \\
& = & a_\sigma \left( \sum_{\omega,\tau} k_{\omega \tau}^{\ast} a_{\omega \tau}
 r_\omega^{\tau}  \Phi(\omega,\tau) k_\tau \right)^\sigma.
\end{array}
\]
At $(\ast)$ we used the cocycle identity and 
$f(e_{\phi \sigma}^\ast e_{\omega \tau \sigma})=a_{\phi \sigma} \delta_{\phi,\omega \tau}$.
At $(\ast\ast)$ we used $k_{\omega \tau}^{\sigma \ast} = k_{\omega \tau}^{\ast \sigma}$ and
$\Phi(\omega \tau,\sigma)^\ast a_{\omega \tau \sigma} \Phi(\omega\tau, \sigma)
= a_\sigma a_{\omega \tau}^\sigma$.
For $\sigma=\id$, we get $f(d^\ast u d) = \sum_{\omega,\tau} k_{\omega \tau}^{\ast} a_{\omega \tau}
 r_\omega^{\tau}  \Phi(\omega,\tau) k_\tau$, hence
\[
\begin{array}{ccl}
f(d_\sigma^\ast u d_\sigma)  
&  = & a_\sigma f(d^\ast u d)^\sigma.
\end{array}
\]
If $u \in N^e$, then $\lambda(d^\ast u d) \in N$ for every $d \in D$. It follows that
$a_\sigma f(d^\ast u d)^\sigma = f(d_\sigma^\ast u d_\sigma)  \in N$ for every $d \in D$.
Hence, $f(d^\ast u d) \in N_\sigma$ for every $d \in D$, which implies that $u \in N_\sigma^e$.
Since every element of $D$ is of the form $d_\sigma$, we can also prove the opposite inclusion.

To prove $(2)$, pick $k \in K$ and note that $k \in N^e$ if and only if 
$\diag(a_\sigma k^\sigma)_{\sigma \in G} = A \lambda(k)$ belongs to $F(N)$.
Hence, $k \in N^e$ if and only if $a_\sigma k^\sigma \in N$ for every $\sigma \in G$.
Claim $(2)$ now follows from the definition of $N_\sigma$. Claim $(4)$ is a simple
consequence of claims $(2)$ and $(3)$.

Claim $(5)$ follows from the fact that an element $k \in K$ belongs to $M^c$
if and only if $a_\sigma k^\sigma = e_\sigma^\ast k e_\sigma \in M^c$. Claim
$(6)$ now follows from Claim $(2)$ applied to $N=M^c$.
\end{proof}

Let us state two simple corollaries.

\begin{cor}
\label{extk}
Setup from Theorem \ref{mainext}. For every unital hermitian cone $N$ on $K$,
the following are equivalent:
\begin{enumerate}
\item $a_\sigma n^\sigma \in N$ for every $n \in N$ and $\sigma \in G$
(i.e. $N \in \mathcal{N}$ and $N_\sigma=N$ for every $\sigma \in \sigma$),
\item $N$ extends to a unital hermitian cone on $D$ (i.e. $N=M \cap K$
for some unital hermitian cone $M$ on $D$),
\item $N$ extends to a unital hermitian cone on $(M_n(K),\#)$ 
(i.e. $N = \lambda^{-1}(L) \cap K$ for some unital hermitian cone $L$ on $(M_n(K),\#)$.)
\end{enumerate}
\end{cor}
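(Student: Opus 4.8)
The plan is to deduce all three equivalences directly from Theorem~\ref{mainext}, which already carries the essential content; the corollary amounts to a repackaging of its parts $(2)$ and $(5)$ together with the good behaviour of preimages under the $\ast$-monomorphism $\lambda$.

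\emph{First,} I would check that the two formulations of $(1)$ coincide. If $a_\sigma n^\sigma \in N$ for every $n \in N$ and $\sigma \in G$, then taking $n=1$ (recall $N$ is unital) gives $a_\sigma \in N$ for all $\sigma$, so $N \in \mathcal{N}$ and the cones $N_\sigma$ are defined (here one uses, via Theorem~\ref{crossinvo}$(2)$, the standing hypothesis that $\ast|_K$ commutes with $G$). The displayed condition says exactly that $N \subseteq N_\sigma$ for every $\sigma$. Since $P \mapsto P_\sigma = (\tfrac{1}{a_\sigma}P)^{\sigma^{-1}}$ preserves inclusion, applying the $\sigma$-action to $N \subseteq N_{\sigma^{-1}}$ and using $(N_\tau)_\sigma = N_{\tau\sigma}$ (the lemma preceding Theorem~\ref{mainext}) together with $N_{\id}=N$ (as $a_{\id}=1$) yields $N_\sigma \subseteq N$; hence $N_\sigma = N$. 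The converse is immediate from the definition of $N_\sigma$. So $(1)$ reads: $N \in \mathcal{N}$ and $N_\sigma = N$ for all $\sigma \in G$.

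\emph{Then} I would run the cycle $(1) \Rightarrow (3) \Rightarrow (2) \Rightarrow (1)$. For $(1)\Rightarrow(3)$: put $L = \phi^{-1}F(N)$, which is a unital hermitian cone on $(M_n(K),\#)$ by the discussion preceding Theorem~\ref{mainext} (an application of Corollary~\ref{firstcorr}); by definition $\lambda^{-1}(L) = N^e$, so $\lambda^{-1}(L)\cap K = N^{ec}$, and Theorem~\ref{mainext}$(2)$ together with $(1)$ gives $N^{ec} = \bigcap_{\sigma \in G} N_\sigma = N$. For $(3)\Rightarrow(2)$: given a unital hermitian cone $L$ on $(M_n(K),\#)$ with $N = \lambda^{-1}(L)\cap K$, set $M = \lambda^{-1}(L) \subseteq D$; since $\lambda$ is an injective $F$-algebra homomorphism with $\lambda(d^\ast) = \lambda(d)^\#$ (Proposition~\ref{extend}), $M$ is a unital hermitian cone on $(D,\ast)$ — closure under addition and under $d(\,\cdot\,)d^\ast$ follows because $\lambda$ is a $\ast$-homomorphism and $L$ has the corresponding closure, while $M \subseteq S_1(D,\ast)$, $M\cap -M=\{0\}$ and $1\in M$ follow from injectivity of $\lambda$, properness of $L$, and $\lambda(1)=I\in L$ — and $M \cap K = N$. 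For $(2)\Rightarrow(1)$: if $N = M\cap K = M^c$ with $M$ a unital hermitian cone on $D$, then $M^c \in \mathcal{N}$ (remark preceding Theorem~\ref{mainext}) and $(M^c)_\sigma = M^c$ for every $\sigma$ by Theorem~\ref{mainext}$(5)$, i.e. $N_\sigma = N$.

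I do not expect a genuine obstacle here, since the hard work has already been carried out in Theorem~\ref{mainext}. The only points that need care are the elementary group-action bookkeeping of the first step that promotes $N \subseteq N_\sigma$ to $N_\sigma = N$, and the routine verification that the preimage of a unital hermitian cone under the $\ast$-monomorphism $\lambda$ is again a unital hermitian cone.
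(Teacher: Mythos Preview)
Your proposal is correct and follows essentially the same route as the paper: the cycle $(1)\Rightarrow(3)\Rightarrow(2)\Rightarrow(1)$ via $L=\phi^{-1}F(N)$, Theorem~\ref{mainext}(2), and Theorem~\ref{mainext}(5) is exactly what the paper does. You supply two pieces of detail the paper leaves implicit---the equivalence of the two formulations of $(1)$ (promoting $N\subseteq N_\sigma$ to $N_\sigma=N$ via the group action), and the verification that $\lambda^{-1}(L)$ is a unital hermitian cone on $D$ for the step $(3)\Rightarrow(2)$---both of which are handled correctly.
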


\begin{proof} Clearly $(3)$ implies $(2)$. By Claim $(5)$ of Theorem \ref{mainext}, $(2)$ implies $(1)$.
To see that $(1)$ implies $(3)$ take $L=\phi^{-1}F(N)$. By Corollary \ref{firstcorr}, we see that
$L$ is a unital hermitian cone on $(M_n(K),\#)$. Note that $\lambda^{-1}(L) =N^e$ by the definition 
of $N^e$, hence $\lambda^{-1}(L) \cap K=N^{ec}$. By assumption $N=N_\sigma$ for every $\sigma \in G$.
Hence $N^{ec} = \bigcap_{\sigma \in G} N_\sigma = N$ by Claim $(2)$ of Theorem \ref{mainext}.
\end{proof}

\begin{cor}
Setup from Theorem \ref{mainext}. For every unital hermitian cone $M$ on $D$ the following
are equivalent:
\begin{enumerate}
\item $M$ extends to a unital hermitian cone on $(M_n(K),\#)$ (i.e. $M=\lambda^{-1}(L)$
for some unital hermitian cone $L$ on $(M_n(K),\#)$),
\item An element $u$ of $D$ belongs to $M$ if and only if $f(d^\ast u d) \in M$ for every $d \in D$
(i.e. $M=M^{ce}$).
\end{enumerate}
\end{cor}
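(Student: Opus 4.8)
The plan is to show that both assertions are equivalent to the single condition ``$M = N^e$ for some $N\in\mathcal N$'', after which the corollary follows at once from the identities in Theorem \ref{mainext}. To translate (1), apply Theorem \ref{arbitrary} with the division algebra taken to be $K$, with $\eta=1$ (legitimate since $A^\ast=A$) and with $\eps=1$: every $1$-hermitian cone $L$ on $(M_n(K),\#)$ is of the form $L=\phi^{-1}F(N)$ for the $1$-hermitian cone $N=G(\phi(L))$ on $K$, and conversely every such $\phi^{-1}F(N)$ is a $1$-hermitian cone on $(M_n(K),\#)$. Such an $L$ is unital precisely when $I\in L$, i.e. when $A=\phi(I)\in\phi(L)=F(N)$; since $A=\diag(a_\sigma)_{\sigma\in G}$, testing the defining condition of $F(N)$ against the standard basis vectors of $K^n$ shows this holds exactly when $a_\sigma\in N$ for every $\sigma\in G$, which (because $a_{\id}=e_{\id}^\ast e_{\id}=1$) is precisely the condition $N\in\mathcal N$. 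As $\lambda^{-1}\phi^{-1}F(N)=N^e$ by definition, and $\phi^{-1}F(N)$ is a unital hermitian cone on $(M_n(K),\#)$ for every $N\in\mathcal N$ by Corollary \ref{firstcorr}, condition (1) says exactly that $M=N^e$ for some $N\in\mathcal N$.

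To translate (2), note that $f$ takes values in $K$, so $f(d^\ast u d)\in M^c=M\cap K$ is the same as $f(d^\ast u d)\in M$; hence by part (1) of Theorem \ref{mainext} applied to $N=M^c$ (which lies in $\mathcal N$ by the remark preceding that theorem) the set $M^{ce}=(M^c)^e$ equals $\{u\in D\mid f(d^\ast u d)\in M \text{ for every } d\in D\}$, i.e. the set appearing in (2). Thus (2) is precisely the equality $M=M^{ce}$.

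Given these reformulations the equivalence is immediate: if $M=M^{ce}=(M^c)^e$ then $N=M^c\in\mathcal N$ witnesses (1); if $M=N^e$ with $N\in\mathcal N$ then $M^{ce}=N^{ece}=N^e=M$ by part (4) of Theorem \ref{mainext}, so (2) holds. The substance was already done in Theorem \ref{mainext}; the one place that asks for care is the bookkeeping in the first paragraph, namely checking that the restriction $L\mapsto\lambda^{-1}(L)$ carries the unital hermitian cones of $(M_n(K),\#)$ onto exactly $\{N^e\mid N\in\mathcal N\}$, and I do not expect any real obstacle beyond that.
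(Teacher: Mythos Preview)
Your proof is correct and follows essentially the same route as the paper: both arguments translate (1) into ``$M=N^e$ for some $N\in\mathcal N$'' via the bijection of Theorem~\ref{arbitrary} (identifying $L=\phi^{-1}F(N)$ and checking that $I\in L$ forces $N\in\mathcal N$), translate (2) into $M=M^{ce}$ via part~(1) of Theorem~\ref{mainext}, and then close the loop using $N^{ece}=N^e$. Your write-up is in fact a bit more explicit than the paper's about why every unital $L$ arises from some $N\in\mathcal N$; the only small imprecision is that testing $A\in F(N)$ against standard basis vectors gives only the forward direction, the converse needing the cone axioms of $N$, but this is routine.
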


\begin{proof}
Suppose that $(1)$ is true. By Theorem \ref{arbitrary} and Corollary \ref{firstcorr}, 
there exists a unital hermitian cone $N$ on $K$ containing all $a_\phi, \phi \in G$
such that $L=\phi^{-1}F(N)$. It follows that $M=N^e$ for some $N \in \mathcal{N}$.
Hence $M^{ce}=N^{ece}=N^e=M$ by Claim $(5)$ of Theorem \ref{mainext}.
Now $(2)$ follows from Claim $(1)$ of Theorem \ref{mainext} (with $N=M^c$).

If $(2)$ is true, then $M=M^{ce}$ by Claim $(1)$ of Theorem \ref{mainext} (with $N=M^c$).
You can take $L=\phi^{-1}F(M^c)$ to get $(1)$.
\end{proof}

The following question remains open:
Is $M^{ce} \subseteq M$ for every $M \in \mathcal{M}$?

We finish this section with two examples which prove the following claims:
\begin{enumerate}
\item a unital hermitian cone from $\mathcal{N}$ can have no extension to $D$,
\item a unitial hermitian cone from $\mathcal{N}$ can have two different extensions to $D$,
\item a unital hermitian cone on $D$ can have no extension to $D \otimes_F K$,
\item a unital hermitian cone on $D$ can have two different extension to $D \otimes_F K$.
\end{enumerate}

\begin{ex}
\label{simpex}
Let $D = \left( \frac{a, b}{\QQ} \right)$ where $a,b>0$ and $i^\ast=i,j^\ast=j$.
Then $K=\QQ(i)$ can be identified with $\QQ(\sqrt{a}) \subset \RR$,
where either $\sqrt{a}>0$ or $\sqrt{a}<0$. The ordering of $\RR$ then induces 
unital hermitian cones (in fact orderings) $N_1$ and $N_2$ on $K$, such that 
$i \in N_1$ and $-i \in N_2$. Since $b > 0$, 
$b$ is a sum of four squares, hence both $N_1$ and $N_2$ contain $a_2=b$.
However, neither $N_1$ nor $N_2$ extends to $D$. Suppose that $N_1$ extends to
a unital hermitian cone $M_1$ of $D$. Since $i \in N_1$, it follows that $j^\ast i j \in M_1$,
so that $-bi=j^\ast i j \in M_1 \cap K=N_1$. Since $1/b$ is a sum of squares, 
it follows that $-i \in N_1$, a contradiction. The proof that $N_2$
does not extend to $D$ is analogous. This proves Claim $(1)$ above.
Note that by Corollary \ref{extk} $N_1 \cap N_2$ extends to $D$.

Theorem \ref{arbitrary} and Corollary \ref{firstcorr} tell us that
$L_1 =\phi^{-1}(F(N_1))$ and $L_2 = \phi^{-1}(F(N_2))$ are 
unital hermitian cones on $(M_n(K),\#)$. Since $N_1 \ne N_2$, also $L_1 \ne L_2$.
Assertion $(3)$ of Theorem \ref{mainext} implies that $\lambda^{-1}(L_1)=\lambda^{-1}(L_2)$.
This proves Claim $(4)$ above.
\end{ex}

\begin{ex}
\label{diffex}
Write $F = \RR(a,b)$, $D = \left( \frac{a,b}{F} \right)$ and $K=F(i)$. 
The involution on $D$ is defined by $i^\ast=i$ and $j^\ast =j$.
Let $R$ be the $\RR$-subalgebra of $D$ generated by $i$ and $j$.
Note that $R=\RR \langle i,j \rangle/(ij+ji)$ is an Ore domain
and that its skew field of fractions is $D$. Similarly, the fields of fractions 
of commutative $\RR$-subalgebras $S=\RR[i,j^2]$ and $T=\RR[i^2,j^2]$
are $K$ and $F$ respectively. We will construct two different unital hermitian cones 
$M_1$ and $M_2$ on $R$ such that $M_1 \cap S=M_2 \cap S$. 
By Proposition 2 in \cite{ci2}, $M_1$ and $M_2$ extend uniquely from $R$ to $D$.
These extensions are clearly different, but they have the same restriction to $K$. 
This will prove Claim $(2)$ above.

Every element of $R$ is a linear combination of monomials $i^m j^n$. 
We pick any monomial ordering $<$ and write $\lt(d)$
for the leading term of $d$ with respect to this monomial ordering.
If $s \in R$ is symmetric and $\lt(s)=c i^m j^n$, then $2|mn$.
For every $r$ such that $\lt(r)=u i^k j^l$ we have that
$\lt(r^\ast (-1)^{\frac{mn}{2}} s r)= cu^2 (-1)^{\frac{(m+2k)(n+2l)}{2}} i^{m+2k}j^{n+2l}$.
It follows that 
\[\begin{array}{c}
M_1 = \{s \in \sym(R) \vert \ \lt(s)=ci^m j^n \text{ where } c(-1)^{\frac{mn}{2}} \ge 0\} \text{ and }\\
M_2 = \{s \in \sym(R) \vert \ \lt(s)=ci^m j^n \text{ where } c(-1)^{\frac{mn}{2}+n} \ge 0\}
\end{array}\]
are unital hermitian cones (even Baer orderings) on $R$. Clearly, they are different and they 
have the same restriction to $S$ (take $n$ even).

Note that $j \in M_1$ and $-j \in M_2$.
We will show that there is no unital hermitian cone on $D \otimes_F K$ which contains either 
$\lambda(j)$ or $-\lambda(j)$. Therefore, neither $M_1$ nor $M_2$ extends to $D \otimes_F K$.
This will prove Claim $(3)$ above. Note that 
\[
A \lambda(j) = \left[ \begin{array}{cc} 1 & 0 \\ 0 & b \end{array} \right]
\left[ \begin{array}{cc} 0 & b \\ 1 & 0 \end{array} \right] =
\left[ \begin{array}{cc} 0 & b \\ b & 0 \end{array} \right] \mbox{ is congruent to } 
\left[ \begin{array}{cc} -b & 0 \\ 0 & b \end{array} \right].
\]
Clearly, there is no unital hermitian cone on $K$ which contains $b$ and $-b$. 
Therefore, by Corollary \ref{firstcorr}, there is no unital hermitian cone on 
$(M_n(K),\#)$ which contains $\lambda(j)$. The same proof also works for $-\lambda(j)$.
\end{ex}

\section{Hermitian trace forms}
\label{last}

Let $A$ be a central simple $F$-algebra with involution $\ast$ and 
$\tr \colon A \to F$ its reduced trace. The mapping 
\[
a \mapsto \tr(a^\ast a)
\]
is called the \textit{hermitian trace form} of $(A,\ast)$. Write
$N_{(A,\ast)}$ for the image of this map. We say that the
hermitian trace form is \textit{positive semidefinite} if
$N_{(A,\ast)} \cap - N_{(A,\ast)} = \{0\}$. In this case,
$N_{(A,\ast)}$ is a unital hermitian cone on $F$.

\begin{prop}
Let $D$ be a central division algebra over $F$ with involution $\ast$
and let $K$ be a maximal $\ast$-invariant subfield. 
Consider the following assertions:
\begin{enumerate}
\item There exists a $\ast$-ordering (=multiplicatively closed unital hermitian cone)
on $F$ which contains $N_{(D,\ast)}$.
\item The extension of the hermitian trace form to $(D \otimes_F K,\ast)$ is positive semidefinite.
\item The hermitian trace form on $(D,\ast)$ is positive semidefinite.
\item $(D \otimes K,\ast)$ is formally real, i.e. has a unital hermitian cone.
\item $(D,\ast)$ is formally real.
\end{enumerate}
Then $(1) \Rightarrow (2) \Rightarrow (3) \Rightarrow (4) \Rightarrow (5)$.
\end{prop}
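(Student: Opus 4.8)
The plan is to prove the four implications one at a time; two of them reduce to a single remark. Consider the $\ast$-homomorphism $\iota\colon D\hookrightarrow D\otimes_F K$, $d\mapsto d\otimes 1$. Since the reduced trace commutes with base change, $\tr_{D\otimes_F K/K}(\iota(d)^\ast\iota(d))=\tr_{D/F}(d^\ast d)$ for every $d\in D$, so (under $F\subseteq K$) $N_{(D,\ast)}\subseteq N_{(D\otimes_F K,\ast)}$; intersecting with negatives gives $N_{(D,\ast)}\cap-N_{(D,\ast)}\subseteq N_{(D\otimes_F K,\ast)}\cap-N_{(D\otimes_F K,\ast)}$, which is $(2)\Rightarrow(3)$. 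And if $M$ is a unital hermitian cone on $(D\otimes_F K,\ast)$, then $\iota^{-1}(M)$ is a unital hermitian cone on $(D,\ast)$ — the cone axioms and properness all pass through the injective $\ast$-homomorphism $\iota$ — which is $(4)\Rightarrow(5)$.

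For $(1)\Rightarrow(2)$, start from a $\ast$-ordering $P$ of $F$ with $N_{(D,\ast)}\subseteq P$. I would first write out the hermitian trace form of $(D\otimes_F K,\ast)$: in the $K$-basis $\{v_i\otimes 1\}$ arising from an $F$-basis $\{v_i\}$ of $D$ its Gram matrix is $G=[\tr_{D/F}(v_i^\ast v_j)]$, a nonsingular $\ast|_F$-hermitian matrix over $F$, hence by Proposition \ref{diag} (here $\eps=1\ne-1$) it is $F$-congruent to $\diag(s_1,\dots,s_{n^2})$ with each $s_i$ a nonzero value of the hermitian trace form of $(D,\ast)$; thus $s_i\in N_{(D,\ast)}\subseteq P$ and $N_{(D\otimes_F K,\ast)}=\{\sum_i s_i y_i^\ast y_i:y_i\in K\}$. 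Now take $v\in N_{(D\otimes_F K,\ast)}\cap-N_{(D\otimes_F K,\ast)}$, say $v=\sum_i s_i y_i^\ast y_i$ and $-v=\sum_i s_i z_i^\ast z_i$. Applying $\tr_{K/F}$ to $v+(-v)=0$ and using that $\tr_{K/F}$ is the restriction of $\tr_{D/F}$ to $K$ (so it carries each $y_i^\ast y_i$ and $z_i^\ast z_i$ into $N_{(D,\ast)}\subseteq P$) we obtain a sum of elements of $P$ — each $s_i(\,\cdot\,)$ lies in $P$ since $P$ is multiplicatively closed — equal to $0$; as $P$ is a proper cone this forces $s_i\tr_{K/F}(y_i^\ast y_i)=0$, hence $\tr_{K/F}(y_i^\ast y_i)=0$, for every $i$. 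But the $\ast|_F$-hermitian form $z\mapsto\tr_{K/F}(z^\ast z)$ on the $F$-space $K$ is nonsingular and all of its values lie in $P$ (again because $\tr_{K/F}=\tr_{D/F}|_K$), hence it is anisotropic: diagonalize it over $F$, observe that every term of the diagonalization lies in $P$, so a vanishing value forces the vector to be $0$. Therefore $y_i=0$ for all $i$, so $v=0$, which is $(2)$.

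For $(3)\Rightarrow(4)$, write $N=N_{(D,\ast)}$, a unital hermitian cone on $F$ by the discussion preceding the proposition, and use the setup of Proposition \ref{extend}: a right $K$-basis $e_1,\dots,e_n$ of $D$, the map $f\colon D\to K$ with $\tr_{D/F}=\tr_{K/F}\circ f$, and the hermitian matrix $A=[f(e_i^\ast e_j)]$ realizing $(D\otimes_F K,\ast)\cong(M_n(K),\#)$ with $X^\#=A^{-1}X^\ast A$. I would introduce the set $C:=\{k\in S_1(K):\tr_{K/F}(c^\ast kc)\in N\ \text{for all}\ c\in K\}$, check that it is a $1$-hermitian cone on $(K,\ast|_K)$ — properness holding because for $k\in C\cap-C$ the nonsingular $\ast|_F$-hermitian form $c\mapsto\tr_{K/F}(c^\ast kc)$ on $K$ vanishes identically, which forces $k=0$ in characteristic $0$ — and show that $C$ contains every diagonal entry $a_i$ of a diagonalization $A'=\diag(a_1,\dots,a_n)$ of $A$ (which exists by Proposition \ref{diag}). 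For this last point one has $\{f(d^\ast d):d\in D\}=\{\alpha^\ast A\alpha:\alpha\in K^n\}=\{\alpha^\ast A'\alpha:\alpha\in K^n\}$, and feeding in the vector $\alpha$ with $i$-th coordinate $c$ and all others $0$ shows $a_i c^\ast c=f(d^\ast d)$ for some $d\in D$, whence $\tr_{K/F}(c^\ast a_i c)=\tr_{D/F}(d^\ast d)\in N$. Finally, Corollary \ref{firstcorr} (applicable since $\eta=1\ne-1$) turns the existence of such a cone into formal reality of $\#$, hence of $(D\otimes_F K,\ast)$, which is $(4)$.

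The implication I expect to cost the most is $(3)\Rightarrow(4)$: the other three are essentially formal once the hermitian trace form of $D\otimes_F K$ has been expressed through that of $D$, whereas here one must manufacture formal reality of the large algebra $D\otimes_F K$ out of a condition living entirely on the small field $F$, and the whole argument hinges on choosing the auxiliary cone $C$ on $K$ correctly and matching it to a diagonal form of $A$ via Corollary \ref{firstcorr}. A secondary nuisance throughout is keeping the first- and second-kind cases of $\ast$ uniform, which is why I would phrase everything in terms of $\ast|_F$-hermitian forms over $F$ rather than splitting into quadratic and sesquilinear cases.
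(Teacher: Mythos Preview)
Your proof is correct and follows essentially the same route as the paper. In particular, your auxiliary cone $C=\{k\in S_1(K):\tr_{K/F}(c^\ast kc)\in N\text{ for all }c\in K\}$ in $(3)\Rightarrow(4)$ is exactly the paper's $M_N$, and the conclusion via Corollary~\ref{firstcorr} is identical.

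The one cosmetic difference is in $(1)\Rightarrow(2)$: the paper introduces the analogous cone $M_P=\{c\in\sym(K):\tr(k^\ast ck)\in P\text{ for all }k\in K\}$ on $K$ and shows $N_{(D\otimes K,\ast)}\subseteq M_P$ together with $M_P\cap-M_P=\{0\}$, whereas you bypass the auxiliary cone and argue directly by applying $\tr_{K/F}$ to the equation $\sum s_i(y_i^\ast y_i+z_i^\ast z_i)=0$ and invoking properness of $P$. Both arguments diagonalize the Gram matrix of the trace form over $F$ and rely on the same anisotropy principle (a $\ast|_F$-hermitian form over $F$ whose diagonal entries all lie in $P$ represents $0$ only trivially), so the content is the same; your packaging is slightly more hands-on, the paper's slightly more conceptual, but neither buys anything the other does not.
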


\begin{proof}
Implications $(2) \Rightarrow (3)$ and $(4) \Rightarrow (5)$ are clear.

$(1) \Rightarrow (2)$ Let $P$ be the $\ast$-ordering containing $N = N_{(D,\ast)}$.
Write $M_P = \{c \in \sym(K) \vert \ \forall k \in K \colon \tr(k^\ast c k) \in P\}$. 
To see that $N_{(D \otimes K,\ast)} \cap -N_{(D \otimes K,\ast)}=\{0\}$, it suffices 
to show that $M_P \cap -M_P =\{0\}$ and $N_{(D \otimes K,\ast)} \subseteq M_P$.
If $c \in M_P \cap -M_P$, then $\tr(k^\ast c k)=0$ for every $k \in K$.
If $c \ne 0$, then write $c^{-1}=k_1^\ast k_1-k_2^\ast k_2$ for $k_1=\frac{1 + c^{-1}}{2}$ and 
$k_2=\frac{1 - c^{-1}}{2}$ and note that $\tr(1) = \tr(k_1^\ast c k_1)-\tr(k_2^\ast c k_2)=0$
which is impossible because $\cha F \ne 0$ by the existence of $P$. 
It follows that $M_P$ is a unital hermitian cone on $(K,\ast)$.
Next we show that $N \subseteq M_P$. To see this, pick  $c \in N$ and note that 
$\tr(k^\ast c k) = c \tr(k^\ast k) \in N \cdot N \subseteq P$ for every $k \in K$.
Now we can prove that $N_{(D \otimes K,\ast)} \subseteq M_P$. Pick an $F$-basis
$g_1,\ldots,g_n$ of $D$ such that $\left[ \tr(g_i^\ast g_j) \right]_{i,j}$ is diagonal.
By the definition of $N$, the diagonal elements $\tr(g_i^\ast g_i)$ belong to $N$.
For every element $z \in D \otimes K \cong M_n(K)$ there exist $k_i \in K$
such that $z = \sum_{i=1}^n \lambda(g_i) k_i $. It follows that
$\tr(z^\# z) = \sum_{i=1}^n k_i^\ast k_i \tr(g_i^\ast g_i) \in M_P$,
because $M_P$ is a unital hermitian cone on $K$ which contains $N$.

$(3) \Rightarrow (4)$ Let $N = N_{(D,\ast)}$ and $M_N = 
\{c \in K \vert \ \forall k \in K \colon \tr(k^\ast c k) \in N\}$. 
As above, we see that $M_N \cap -M_N=\{0\}$ hence $M_N$
is a unital hermitian cone on $(K,\ast)$. 
Let $e_i, i=1,\ldots,m$ be a right $K$-basis of $D$ such that 
$\left[ f(e_i^\ast e_j) \right]_{i,j}$ is diagonal and write 
$a_i = f(e_i^\ast e_i)$. Since $\tr(k^\ast a_i k)=
\tr(f((e_i k)^\ast (e_i k)))=\tr((e_i k)^\ast (e_i k)) \in N$
for every $k \in K$, it follows that $a_i \in M_N$ for every $i=1,\ldots,m$.
Now Corollary \ref{firstcorr} implies that $D \otimes K$ is formally real.
\end{proof}

In Section 5 of \cite{ps}, it is proved that the five assertions of
the proposition are equivalent for quaternion algebras $\left( {a,b \over F} \right)$
with standard involution. In Section 4 of \cite{lsu}, it is shown that $(5)$ is equivalent to $(3)$ 
in many other cases. (They use different terminology.) By our Example \ref{mainex},
$(5)$ is not always equivalent to $(4)$. We conjecture that in general any two assertions are inequivalent.

\vskip 5mm

\textbf{Acknowledgements: } I would like to thank to Prof. Tom Craven for sharing with me his experience.
He proved an early version of Theorem \ref{main} (with $\eps=1$ and $\ast=\id$) and read later versions
of the manuscript. I would also like to thank to Dr. Thomas Unger for his comments on Section \ref{last}.

\end{document}